 \setlist[itemize]{noitemsep,nolistsep}
\def\Z{{\bf Z}}
\def\C{{\bf C}}
\def\P{{\bf P}}
\def\PP{{\bf P}}
\def\cI{\mathscr{I}}
\def\cO{\mathscr{O}}
\def\cP{\mathscr{P}}
\def\cR{{\mathscr{R}}}
\def\cC{\mathscr{C}}
\def\cM{\mathscr{M}}
\def\cK{\mathscr{K}}
\def\cT{{\mathscr{T}}}
\def\cU{\mathscr{U}}
\def\BBQ{{\overline{Q}}}
\def\llra{\hbox to 10mm{\rightarrowfill}}
\def\lllra{\hbox to 15mm{\rightarrowfill}}
\def\llla{\hbox to 10mm{\leftarrowfill}}
\def\lllla{\hbox to 15mm{\leftarrowfill}}
\def\dra{\dashrightarrow}
\def\hra{\hookrightarrow}
\def\isom{\simeq}
\def\ie{\hbox{i.e.}}
 \def\vide{\varnothing}
  \def\emptyset{\varnothing}
\DeclareMathOperator{\isomto}{\stackrel{{}_{\scriptstyle\sim}}{\to}}
\DeclareMathOperator{\codim}{codim}
\DeclareMathOperator{\ev}{ev}
\DeclareMathOperator{\Gr}{\mathsf{Gr}}
\DeclareMathOperator{\CGr}{\mathsf{CGr}}
\DeclareMathOperator{\Fl}{\mathsf{Fl}}
\DeclareMathOperator{\Hilb}{Hilb}
\DeclareMathOperator{\Ker}{Ker}
\DeclareMathOperator{\Sym}{\mathsf S}
\def\llra{\hbox to 10mm{\rightarrowfill}}
\def\lllra{\hbox to 15mm{\rightarrowfill}}
\def\bw#1#2{\textstyle{\bigwedge\hskip-0.9mm^{#1}}\hskip0.2mm{#2}}
\def\sbw#1#2{{\bigwedge\hskip-0.9mm^{#1}}\hskip0.1mm{#2}}
\newtheorem{lemm}{Lemma}[section]
\newtheorem{theo}[lemm]{Theorem}
\newtheorem{coro}[lemm]{Corollary}
\newtheorem{prop}[lemm]{Proposition}
\theoremstyle{remark}
\def\unit{\mathbb{1}}
\def\LL{\mathbb{L}}
\def\MM{\mathbb{M}}
\def\BW{{\overline{W}}}
\newcommand{\cone}[1]{\mathsf{C}_{#1}}
\DeclareMathOperator{\OGr}{\mathsf{OGr}}
\DeclareMathOperator{\OFl}{\mathsf{OFl}}
\DeclareMathOperator{\Bl}{\mathrm{Bl}}
\newcommand{\rS}{{\mathscr{S}}}
\newcommand{\reg}{\mathrm{reg}}
\subjclass[2010]{14J45, 14M20, 14J40, 14J28}
\begin{document}
\title 
{On the cohomology of Gushel--Mukai sixfolds}

\author[O.~Debarre]{Olivier Debarre}
\address{\parbox{0.9\textwidth}{Univ  Paris Diderot, \'Ecole normale su\-p\'e\-rieu\-re, PSL Research University,
\\[1pt] 
CNRS, D\'epar\-te\-ment Math\'ematiques et Applications
\\[1pt]
45 rue d'Ulm, 75230 Paris cedex 05, France}}
\email{{olivier.debarre@ens.fr}}

 \author[A. Kuznetsov]{Alexander Kuznetsov}
\address{\parbox{0.9\textwidth}{Algebraic Geometry Section, Steklov Mathematical Institute,
\\[1pt]
8 Gubkin str., Moscow 119991, Russia
\\[5pt]
The Poncelet Laboratory, Independent University of Moscow
\\[5pt]
Laboratory of Algebraic Geometry, National Research University Higher School of Economics, Russian Federation}}
\email{{\tt  akuznet@mi.ras.ru}}
 
\thanks{A.K. was partially supported by the Russian Academic Excellence Project ``5-100'', by RFBR grants 14-01-00416, 15-01-02164, 15-51-50045, and by the Simons foundation.}

\def\setminus{\smallsetminus}
\def\cong{\isom}

\newcommand{\blue}[1]{\textcolor{blue}{#1}}
\newcommand{\red}[1]{{\color{red}{#1}}}

\begin{abstract}
We provide a   stable rationality construction for some smooth complex Gushel--Mukai varieties of dimension 6.
As a consequence, we  {compute} the integral singular cohomology of any smooth Gushel--Mukai sixfold  {and in particular, show that it} is torsion-free.
\end{abstract}

\maketitle

\section{Introduction}

A smooth (complex) Gushel--Mukai (GM for short) variety of dimension $n$ is a smooth dimensionally transverse intersection
\begin{equation*}
X = \CGr(2,V_5) \cap \P(W) \cap Q \subset \P(\C \oplus \bw2V_5),
\end{equation*}
of a cone with vertex $ {\mathsf{v}}$ over the Grassmannian $\Gr(2,V_5)$ of two-dimensional subspaces in a five-dimensional vector space $V_5$
with a linear subspace $\P(W) \cong \P^{n+4}$ and a quadratic hypersurface $Q \subset \P(W)$ (\cite[{Definition~2.1}]{DK1}).

There are two types of smooth GM varieties.
If the linear space $\P(W)$ does not contain the vertex~${\mathsf{v}}$, one can rewrite $X$  as $ \Gr(2,V_5) \cap \P(W) \cap Q$,
\ie, $X$ is a quadratic section of a linear section of the Grassmannian.
If, on the  contrary, $\P(W)$ contains the vertex,   $\CGr(2,V_5) \cap \P(W)$ is a cone over a linear section of the Grassmannian,
and since the quadric $Q$ does not pass through ${\mathsf{v}}$ (since $X$ is smooth), the projection from ${\mathsf{v}}$ identifies $X$
with a double cover of this linear section branched along a GM variety of dimension $n-1$.

GM varieties of the first type are called \emph{ordinary}  and those of the second type are called \emph{special}.\
When $n \le 5$, the two types of GM varieties belong to the same deformation family; when $n = 6$, all GM varieties are special.

Since a smooth ordinary GM variety    is a complete intersection of ample divisors in  {the Grassmannian}~$\Gr(2,V_5)$, the Lefschetz Hyperplane Theorem describes all its {singular} cohomology groups except for the middle one 
and implies that these   groups are all torsion-free.
By a deformation argument, the same results hold  for all smooth GM varieties of dimension at most~5.
This argument does not work in dimension~6 since there are no ordinary GM sixfolds.

Using analogous arguments, one can determine the cohomology groups $H^k(X,\Z)$ of a smooth (special) GM sixfold $X$ except for $k=6$ or 7  {(see~\cite[Proposition~3.3]{DK2})}.\
Whether the (isomorphic) torsion subgroups of  $H^6(X,\Z)$ and $H^7(X,\Z)$ are zero seems to be a   question not easily answered by standard tools.

The goal of this paper is to show that $H^\bullet(X,\Z)$ is torsion-free for any smooth GM sixfold~$X$ by using a geometrical approach.\
It is natural to attack this question by using the fact that~$X$ is rational (\cite[Proposition~4.2]{DK1}): a good factorization of a birational isomorphism~$X\dra \P^6$  would lead to a description of $H^6(X,\Z)$.\
However, the birational isomorphism described in~\cite[Proposition~4.2]{DK1} (and   any other construction we are aware of) is quite complicated and does not lead to a simple factorization.

Our idea is  {to} use instead a stable rationality construction.
We show  that a natural \mbox{$(\P^3\times \P^1)$}-fibration over a specific  GM sixfold $X$ 
is dominated (birationally) by an explicit iterated blow up of the product  of $\P^4$ with a smooth six-dimensional quadric.
This allows us to embed the group $H^\bullet(X,\Z)$ into a direct sum of Tate twists of $\Z$, the cohomology of a K3 surface~$Y$, and that of an ample divisor in the Hilbert square of $Y$.\
This implies that $H^\bullet(X,\Z)$ is torsion-free (see Theorem~\ref{theorem:main} for   details).
By a deformation argument, the cohomology of all smooth GM sixfolds is torsion-free.

The K3 surface $Y$  appearing above  is itself a GM variety and is related to the sixfold~$X$ by a generalization  (\cite[{Definition~4.7}]{KP}) of the duality discussed in~\cite[Section~{3.}6]{DK1}.
In fact, we work in the opposite direction: we start from a sufficiently general GM surface $Y$  and   take~$X$ to be its generalized dual GM sixfold.\
Then we construct $\P^3$-fibrations over $X$ and~$Y$ that come with maps to a six-dimensional quadric with fibers   mutually orthogonal linear sections 
of the dual Grassmannians $\Gr(2,V_5)$ and $\Gr(2,V_5^\vee)$ of respective codimensions 3 and 7.\
A smooth linear section of $\Gr(2,V_5)$ of codimension 3 is a quintic del Pezzo threefold.\
It is a classical observation (already known to Castelnuovo) that the projectivization of the tautological bundle on it is   the blow up of $\P(V_5)$ along a  projection of a Veronese surface.\
Generalizing this to mildly singular quintic del Pezzo threefolds and performing the construction in a family  gives the required stable rationality construction for~$X$.

The paper is organized as follows.\
In Section~\ref{section:gm-duality}, we describe the construction of a smooth GM sixfold $X$ from a GM surface $Y$ and a relation between them.\
In particular, we construct a quintic del Pezzo fibration structure on a $\P^3$-bundle over $X$ whose degeneration is controlled by a $\P^3$-fibration over $Y$.\
In Section~\ref{section:families}, we describe a rationality construction for (a $\P^1$-bundle over) a family of quintic del Pezzo threefolds.\
In Section~\ref{section:gm-6folds}, we describe in detail the fibration constructed in Section~\ref{section:gm-duality} and deduce torsion-freeness of $H^\bullet(X,\Z)$.\
We also speculate about the structure of the Chow motive of $X$ and its relation to that of $Y$,
and as a byproduct give a simple computation of the Hodge numbers of $X$.

\section{GM surfaces and their generalized dual GM sixfolds}
\label{section:gm-duality}

\subsection{A general GM surface}

Let $V_5$ be a five-dimensional vector space and set $V'_5 := V_5^\vee$.
Let $W' \subset \bw2V'_5$ be a   subspace of codimension 3 such that
\begin{equation*}
M' := \Gr(2,V'_5) \cap \P(W')
\end{equation*}
is a smooth (quintic del Pezzo) threefold and let $Q' \subset \P(W')$ be a  smooth  quadric such that
\begin{equation*}
Y := M' \cap Q'
\end{equation*}
is a smooth GM (K3) surface (\cite[Section~2.3]{DK1}).

Consider the Hilbert squares  
 and natural maps
\begin{equation*}
\Hilb^2(Y) \hra \Hilb^2(M') \hra \Hilb^2(\P(W')) \to \Gr(2,W')
\end{equation*}
(the first two maps are induced by the embeddings $Y \hra M' \hra \P(W')$  and the last map takes a length-2 subscheme {$\xi$} to the unique line $\langle\xi\rangle\subset \P(W')$ that contains it.\
Consider also the isotropic Grassmannian $\OGr_{Q'}(2,W') \subset \Gr(2,W')$ parameterizing lines  {in $\P(W')$} contained in~$Q'$  and define
\begin{equation}\label{mma}
 \Hilb^2_{Q'}(Y) := \Hilb^2(Y) \times_{\Gr(2,W')} \OGr_{Q'}(2,W') . 
\end{equation}
This threefold  parameterizes   length-2 subschemes $\xi$ of $Y$ whose linear span $\langle\xi\rangle$  is contained in $Q'$ and it will play an important role in  the construction of Section~\ref{section:gm-6folds}.

\begin{lemm}\label{lemma:s2q}
When $Q'$  is general, $Y = M' \cap Q'$ is a smooth surface containing neither lines  nor conics and the subscheme $\Hilb^2_{Q'}(Y) \subset \Hilb^2(Y)$ is a smooth ample divisor. 
\end{lemm}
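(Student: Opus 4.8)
The plan is to treat the three assertions separately, the statement about $\Hilb^2_{Q'}(Y)$ being the crux. I would first dispose of the claims about $Y$ by a dimension count in the projective space $\P(\Sym^2 W'^\vee)\isom\P^{27}$ of quadrics. Smoothness of $Y=M'\cap Q'$ for general $Q'$ is Bertini. For the absence of lines and conics, recall that on the smooth quintic del Pezzo threefold $M'$ the lines form a $2$-dimensional family and the conics a $4$-dimensional family. A line $\ell\subset M'$ lies in $Q'$ exactly when $q'|_\ell\in\Sym^2\ell^\vee$ vanishes, which is $3$ linear conditions on $q'$; a conic $C\subset M'$ imposes $5$ conditions (as $q'|_C\in H^0(C,\cO_C(2))$ and $\cO_C(2)\isom\cO_{\P^1}(4)$). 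Hence the quadrics containing some line of $M'$ sweep out at most $2+(27-3)=26$ dimensions, and those containing some conic at most $4+(27-5)=26$; both loci are proper in $\P^{27}$, so a general $Q'$ avoids them. Since $Y\subset M'$, any line or conic of $Y$ would be one of $M'$, so $Y$ contains neither (degenerate conics being excluded once there are no lines).

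Next, the divisor structure and its class. Write $\pi\colon\Xi\to\Hilb^2(Y)$ and $\tau\colon\Xi\to Y$ for the universal length-$2$ subscheme and let $\phi\colon\Hilb^2(Y)\to\Gr(2,W')$ be the span map, so that $\mathcal T:=\phi^*\cU^\vee\isom\pi_*\tau^*\cO_Y(1)$ is the tautological rank-$2$ bundle whose fibre at $\xi$ is the space of linear forms on $\langle\xi\rangle$. The quadratic form $q'$ cutting out $Q'$ determines a section of $\Sym^2\cU^\vee$ with zero locus $\OGr_{Q'}(2,W')$, so $\Hilb^2_{Q'}(Y)$ is the zero locus of the induced section $s$ of $\Sym^2\mathcal T$. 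The key point, and the reason one must know $Y\subset Q'$, is that for every $\xi$ the form $q'|_{\langle\xi\rangle}$ vanishes on $\xi$; the forms on the line $\langle\xi\rangle$ vanishing on the length-$2$ scheme $\xi$ form a line, so $s$ factors through the line subbundle $L=\ker\!\bigl(\Sym^2\mathcal T\to\pi_*\tau^*\cO_Y(2)\bigr)$. Thus $\Hilb^2_{Q'}(Y)$ is the zero locus of a section of a \emph{line} bundle, hence an effective divisor. Writing $h$ for the class induced by the polarization and $2\delta$ for the exceptional divisor of the Hilbert--Chow morphism, the standard identities $c_1(\pi_*\tau^*\cO_Y(1))=h-\delta$ and $c_1(\pi_*\tau^*\cO_Y(2))=2h-\delta$ together with the above exact sequence give $c_1(L)=3(h-\delta)-(2h-\delta)=h-2\delta$.

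For smoothness of the divisor I would run a relative Bertini/generic-smoothness argument over the base of quadrics: form the family $\mathcal Y\to B$ of K3 surfaces $Y=M'\cap Q'$ with its relative Hilbert square and the relative incidence divisor $\mathcal D\subset\Hilb^2(\mathcal Y/B)$ whose fibres are the $\Hilb^2_{Q'}(Y)$. One shows the total space $\mathcal D$ is smooth (equivalently, that the section $s=s_{Q'}$ is transverse to the zero section for generic $Q'$), using that $Y$ has no lines to guarantee $\phi$ is everywhere defined and finite onto its image, and no conics to control the fibres; generic smoothness in characteristic $0$ then gives a smooth general fibre.

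The main obstacle is \textbf{ampleness} of $h-2\delta$ on the hyperkähler fourfold $\Hilb^2(Y)$. For $Y$ a general GM (hence general degree-$10$) K3 surface, $\Hilb^2(Y)$ has Picard rank $2$ with Beauville--Bogomolov form $q(h)=10$, $q(\delta)=-2$, $q(h,\delta)=0$, so $q(h-2\delta)=2>0$ and the class lies in the positive cone; by the Fujiki relation $(h-2\delta)^4=3\,q(h-2\delta)^2>0$, so it is at least big. To upgrade this to ampleness I would appeal to the description of the nef cone of a fourfold of $\mathrm{K3}^{[2]}$-type (Hassett--Tschinkel, Bayer--Macr\`i, Mongardi): its extremal rays are spanned by Hilbert--Chow fibres, on which $h-2\delta$ has degree $2>0$, and by lines of Lagrangian planes $\P^2\subset\Hilb^2(Y)$. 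The delicate point is to show no such plane is $(h-2\delta)$-trivial: the planes $\Hilb^2(C)$ coming from conics $C\subset Y$ are removed by the absence of conics, but one must equally control the plane traced by the two-dimensional family of lines $\ell\subset M'$ via $\ell\mapsto\ell\cap Q'$. Checking by Nakai--Moishezon that $(h-2\delta)^2$ and $(h-2\delta)$ stay strictly positive on every extremal surface and curve — i.e. that the genericity hypotheses really force $h-2\delta$ into the interior of the ample cone rather than onto its boundary wall of square $-\tfrac52$ — is where I expect the real work to lie.
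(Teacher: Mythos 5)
Your computation of the divisor class is correct, and it in fact contradicts --- and corrects --- the paper's own proof. The paper asserts, via the displayed four-term sequence for $\Sym^2{\widetilde\cP}^\vee$, that $\sigma^*(\Hilb^2_{Q'}(Y))$ has class $h_1+h_2-e$; but that sequence cannot be exact: using $c_1({\widetilde\cP}^\vee)=h_1+h_2-e$ (from the paper's first, correct, sequence) and $c_1(\epsilon_*\cO(2h))=e$, the alternating sum of first Chern classes is
\begin{equation*}
(h_1+h_2-e)-3(h_1+h_2-e)+(2h_1+2h_2)-e = e \neq 0 .
\end{equation*}
The correct first term is $\cO(h_1+h_2-2e)$: the kernel of $\Sym^2{\widetilde\cP}^\vee \to \cO(2h_1)\oplus\cO(2h_2)$ is the image of the multiplication map $L_1\otimes L_2\to\Sym^2{\widetilde\cP}^\vee$, where $L_i=\Ker({\widetilde\cP}^\vee\to\cO(h_i))$ is a line bundle of class $h_{3-i}-e$. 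Geometrically, away from the diagonal the divisor $\sigma^*(\Hilb^2_{Q'}(Y))$ is cut out scheme-theoretically by the single bilinear equation $q'(y_1,y_2)=0$ (for $y_1,y_2\in Q'$, the line $\langle y_1,y_2\rangle$ lies in $Q'$ if and only if the polar form vanishes), a section of $\cO(h_1+h_2)$ on $Y\times Y$ which vanishes to order exactly $2$ along $\Delta(Y)$ because $T_yY\subset T_yQ'$ for every $y$; hence the class of its strict transform is $h_1+h_2-2e=\sigma^*(h-2\delta)$, which is exactly your $c_1(L)$. Consequently the paper's ampleness argument --- that the divisor is the restriction of the Pl\"ucker polarization under $\Hilb^2(Y)\hra\Gr(2,W')$ --- does not apply to the actual divisor class; the statement of the lemma is still true, but not for that reason.

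This makes the step you flag as the real work into a genuine gap, which your proposal does not close: the Beauville--Bogomolov inequality $q(h-2\delta)=2>0$ gives only bigness, and the wall analysis is outlined but not executed. It can be closed without any case-by-case study of extremal rays. For a general polarized K3 surface $(Y,h)$ of degree $10$ (which $Y$ is, for general $Q'$), O'Grady's theory of double EPW sextics provides a biregular anti-symplectic involution $\iota$ of $\Hilb^2(Y)$ whose quotient is an EPW sextic $Z\subset\P^5$, the quotient map being finite of degree $2$ with $h-2\delta$ equal to the pullback of $\cO_Z(1)$; a finite pullback of an ample class is ample, and we are done. Equivalently: $\iota^*$ fixes $h-2\delta$ and negates its orthogonal class $2h-5\delta$, so $\iota^*h=9h-20\delta$; since the nef cone is $\iota^*$-invariant and $h$ lies on its boundary, the nef cone is spanned by $h$ and $9h-20\delta$, and $h-2\delta=\tfrac{1}{10}\bigl(h+(9h-20\delta)\bigr)$ lies in its interior, hence is ample (consistent with the Bayer--Macri computations you invoke).

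Two minor points. Your smoothness step is only a sketch, and the relative Bertini argument is delicate because both the surface and the section move with $Q'$; the paper's route is cleaner and complete: $\Hilb^2_{Q'}(Y)=\Hilb^2_{Q'}(M')$ (a length-$2$ scheme $\xi\subset M'$ with $\langle\xi\rangle\subset Q'$ automatically lies in $M'\cap Q'=Y$), and on the \emph{fixed} smooth sixfold $\Hilb^2(M')$ the rank-$3$ bundle $\Sym^2\cP^\vee$ is globally generated by $\Sym^2 W'^{\vee}$, so Bertini gives a smooth zero locus of codimension $3$ for general $Q'$. Your dimension counts excluding lines and conics are fine.
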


\begin{proof}
Since $Q'$ is general, the surface $Y$ is a general polarized K3 surface of degree 10 in $\P^6$, hence contains neither lines  nor conics.

Define   $\Hilb^2_{Q'}(M') := \Hilb^2(M')   \times_{\Gr(2,W')}  \OGr_{Q'}(2,W')$.\
  {A bit surprisingly,}  
we have
\begin{equation*}
\Hilb^2_{Q'}(Y) = \Hilb^2_{Q'}(M'). 
 \end{equation*}
Indeed, if $\xi$ is a subscheme of $M'$ such that $\langle \xi \rangle \subset Q'$, then $\xi \subset M' \cap Q' = Y$.
This gives one embedding  and the other   is induced by the embedding $Y \hra M'$.

Let $\cP$ be the tautological {rank-2} bundle on $\Gr(2,W')$.\
The vector bundle $\Sym^2\!\cP^\vee$ is generated by the space~$\Sym^2\!W^{\prime\vee}$ of global sections and the zero-locus of a section $Q' \in \Sym^2\!W^{\prime\vee}$ is $\OGr_{Q'}(2,W') \subset \Gr(2,W')$.\
By base change, the same is true on $ \Hilb^2(M')$, hence for $Q'$ general,  the zero-locus $\Hilb^2_{Q'}(M') \subset \Hilb^2(M')$ is smooth of codimension~3.\
Thus for general~$Q'$, the subscheme $\Hilb^2_{Q'}(Y) \subset \Hilb^2(Y)$ is a smooth divisor.

To prove ampleness, consider the   double cover 
\begin{equation*}
\sigma \colon \Bl_{\Delta(Y)}(Y \times Y) \xrightarrow{\ 2:1\ } \Hilb^2(Y)
\end{equation*}
from the blow up of $Y\times Y$ along the diagonal.
Since $\sigma$ is finite, it is enough to show that the divisor  $\sigma^*(\Hilb^2_{Q'}(Y)) $ is ample.
The dual of the pullback ${\widetilde\cP}$ of the bundle $\cP$ to the blow up $\Bl_{\Delta(Y)}(Y \times Y)$ fits into an exact sequence
\begin{equation*}
0 \to {\widetilde\cP}^\vee \to \cO(h_1) \oplus \cO(h_2) \to \epsilon_*\cO(h) \to 0,
\end{equation*}
where $h_1$ and $h_2$ are the pullbacks of the hyperplane classes of the factors of $Y \times Y$, $\epsilon$ is the embedding of the exceptional divisor $\P(T_Y) \subset \Bl_{\Delta(Y)}(Y \times Y)$,
and $h$ is the pullback of the hyperplane class of $Y$ via the projection $\P(T_Y) \to Y$.\
In particular, 
\begin{equation*}
c_1({\widetilde\cP}^\vee) = h_1 + h_2 - e,
\end{equation*}
where $e$ is the class of the exceptional divisor.\
Since $Y$ is an intersection of quadrics and contains no lines, the map $\Hilb^2(Y) \to \Gr(2,W')$ 
 is an embedding.\ 
The ample class $c_1({ \cP}^\vee)$   therefore remains ample on~$\Hilb^2(Y)$, hence also the class $h_1 + h_2 - e$ on $\Bl_{\Delta(Y)}(Y \times Y)$.

Furthermore,   the symmetric square of ${\widetilde\cP}^\vee$ fits into exact sequence
\begin{equation*}
0 \to \cO(h_1 + h_2 - e) \to \Sym^2\!{\widetilde\cP}^\vee \to \cO(2h_1) \oplus \cO(2h_2) \to \epsilon_*\cO(2h) \to 0.
\end{equation*}
The image of the global section $Q'$ of $\Sym^2\!{\widetilde\cP}^\vee$ in $\cO(2h_1) \oplus \cO(2h_2)$ vanishes (since $Y \subset Q'$), hence the class of $\sigma^*(\Hilb^2_{Q'}(Y))$ in $\Bl_{\Delta(Y)}(Y \times Y)$ equals $h_1 + h_2 - e$.\
As we saw, this class is ample. 
 \end{proof}

 {In the rest of the paper, we make the following generality assumptions on the subspace $W' \subset \bw2V'_5$ and the quadric $Q' \subset \P(W')$:}
\begin{equation}\label{eq:assumptions}
\begin{aligned}
& \bullet \text{the intersection $M' = \Gr(2,V'_5) \cap \P(W')$ is a smooth threefold;}\\
& \bullet \text{the quadratic hypersurface $Q' \subset \P(W')$ is smooth;}\\
& \bullet \text{the intersection $Y = M' \cap Q'$ is a smooth surface containing neither lines, nor conics;}\\
& \bullet \text{the divisor $\Hilb^2_{Q'}(Y) \subset \Hilb^2(Y)$ is smooth  and   ample.}
\end{aligned}
\end{equation}

\subsection{Dual GM sixfold}\label{subsection:dual-gm}

As in the previous section,     $W' \subset \bw2V'_5 = \bw2V^\vee_5$ is a subspace of codimension 3 and   $Q' \subset \P(W')$    a smooth quadratic hypersurface.\
Considering it as a quadric (of codimension~4) in~$\P(\bw2V^\vee_5)$, we denote by 
\begin{equation*}
Q_0 := Q^{\prime\vee} \subset \P(\bw2V_5)
\end{equation*}
its projective dual.
It can be {described} as follows.

Let $K := {W}^{\prime\perp} \subset \bw2V_5$ be the orthogonal of~$W'$, so that $\BW_0 := (\bw2V_5)/K$  {is isomorphic to} $W^{\prime\vee}$.
Since $Q'$ is smooth, the corresponding quadratic form defines an isomorphism $W' \isomto W^{\prime\vee} = \BW_0$.\
Its inverse is an isomorphism $\BW_0 \isomto W' = \BW_0^\vee$ and thus defines a smooth quadratic hypersurface $\BBQ_0 \subset \P(\BW_0)$.\
Then $Q_0$ is the cone $ \cone{\P(K)}\BBQ_0$ over $\BBQ_0 \subset \P(\bw2V_5/K)$ with vertex $\P(K)$.\
In particular, it is a {quadratic hypersurface} of corank 3 in $\P(\bw2V_5)$.

We define a GM sixfold $X$ as the double cover 
\begin{equation*}
X \xrightarrow{\ 2:1\ } \Gr(2,V_5)
\end{equation*}
branched along the ordinary GM fivefold  $X_0 := \Gr(2,V_5) \cap Q_0$.

\begin{lemm}\label{lemma:6fold}
Under the assumptions~\eqref{eq:assumptions},
 $X$ is a smooth GM sixfold.
\end{lemm}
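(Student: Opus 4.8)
The plan is to exploit that $X$ is, by construction, presented as a double cover of the smooth variety $\Gr(2,V_5)$ branched along the quadric section $X_0=\Gr(2,V_5)\cap Q_0$; this is exactly the shape of a special GM sixfold. After completing the square in the cone coordinate, $X$ is cut out in the total space of $\cO_{\Gr}(1)$ by an equation of the form $s^2+Q_0(\omega)=0$, which is smooth wherever $s\neq0$ and is smooth along $s=0$ precisely where $X_0$ is a smooth divisor. So the whole statement reduces to proving that $X_0\subset\Gr(2,V_5)$ is smooth (it is automatically a divisor, since $\BBQ_0$ is a genuine quadric and $\Gr(2,V_5)\not\subset Q_0$).

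To analyze $\Sing(X_0)$, let $q_0\colon\bw2{V_5}\to\bw2{V'_5}$ be the polar map of $Q_0$. From the description of $Q_0$ as the cone $\cone{\P(K)}\BBQ_0$ one reads off $\ker q_0=K$ and $\mathrm{im}\,q_0=K^\perp=W'$, the induced isomorphism $\BW_0\isomto W'$ being $(q')^{-1}$, where $q'\colon W'\isomto\BW_0$ is the polar isomorphism of $Q'$. For a decomposable $\omega$, with $R_\omega\subset V_5$ its plane, the affine tangent space is $T_\omega\Gr(2,V_5)=R_\omega\wedge V_5$, whose annihilator in $\bw2{V'_5}$ is $\bw2{(R_\omega^\perp)}$. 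Hence $[\omega]\in\Sing(X_0)$ if and only if $Q_0(\omega)=0$ and $q_0(\omega)\in\bw2{(R_\omega^\perp)}$. This splits the singular locus into the \emph{vertex part} $q_0(\omega)=0$, i.e.\ $[\omega]\in\Gr(2,V_5)\cap\P(K)$, and a \emph{tangency part} with $q_0(\omega)\neq0$.

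The vertex part I expect to dispose of cleanly, using only that $M'$ is smooth. Suppose $0\neq\kappa=u_1\wedge u_2\in K$, so that $[\kappa]\in\Gr(2,V_5)\cap\P(K)$; write $R_\kappa=\langle u_1,u_2\rangle$ and $\kappa^\perp=\{\xi\in\bw2{V'_5}:\langle\kappa,\xi\rangle=0\}$, a hyperplane. Since $\kappa\in K=W^{\prime\perp}$ one has $W'\subset\kappa^\perp$, and a direct check gives $\bw2{(R_\kappa^\perp)}\subset\kappa^\perp$ as well. As $\dim W'=7$, $\dim\bw2{(R_\kappa^\perp)}=3$, and $\dim\kappa^\perp=9$, the count $7+3-9=1$ forces a nonzero $\eta\in W'\cap\bw2{(R_\kappa^\perp)}$. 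Being in $\bw2{}$ of a three-dimensional space, $\eta$ is decomposable, so $[\eta]\in\Gr(2,V'_5)\cap\P(W')=M'$; and $R_\kappa\subset(P_\eta)^\perp=:U_3$ yields $0\neq\kappa\in K\cap\bw2{U_3}$. By the tangent-space computation applied to $\Gr(2,V'_5)$, this exactly says $M'$ is singular at $[\eta]$, contradicting \eqref{eq:assumptions}. Therefore $\Gr(2,V_5)\cap\P(K)=\vide$; in particular $K\cap\bw2{U_3}=0$ for every three-plane $U_3$, the projection $\pi\colon\Gr(2,V_5)\to\P(\BW_0)$ from $\P(K)$ is a morphism, and $X_0=\pi^{-1}(\BBQ_0)$.

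The tangency part is the crux, and the step I expect to be the main obstacle. The key identity is $Q_0(\omega)=Q'\bigl(q_0(\omega)\bigr)$, which follows from $q'\circ q_0=(\omega\mapsto\omega\bmod K)$ and $Q'(\eta)=\langle\eta,q'(\eta)\rangle$. Thus if $[\omega]$ is a tangency singularity and $\eta:=q_0(\omega)\neq0$, then $\eta\in\bw2{(R_\omega^\perp)}$ is decomposable, so $[\eta]\in M'$, while $Q'(\eta)=Q_0(\omega)=0$ forces $[\eta]\in M'\cap Q'=Y$; moreover $R_\omega\subset P_\eta^\perp=:U_3$ and $\omega\bmod K=q'(\eta)$, so the datum is a flag $R_\omega\subset U_3$ with $[\eta]\in Y$. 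I would then translate the existence of such a configuration, via $\pi$ and the incidence of $Y$ with the $\sigma$-planes $\P(\bw2{U_3})\subset\Gr(2,V_5)$ (each of which meets $X_0$ in a conic), into the degeneration data of the quintic del Pezzo fibration over $X$ described in the introduction — the degeneration being governed by the $\P^3$-fibration over $Y$ and by $\Hilb^2_{Q'}(Y)$. The assumptions in \eqref{eq:assumptions} that $Y$ contains neither lines nor conics and that $\Hilb^2_{Q'}(Y)\subset\Hilb^2(Y)$ is smooth and ample are precisely what is needed to exclude these configurations. Whereas the vertex step is essentially formal and uses only smoothness of $M'$, carrying out this last translation carefully — matching each possible tangency point to an excluded object on $Y$ or on $\Hilb^2_{Q'}(Y)$ — is where the real work lies.
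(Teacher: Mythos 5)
Your reduction is correct as far as it goes: smoothness of $X$ is equivalent to smoothness of the branch divisor $X_0=\Gr(2,V_5)\cap Q_0$, your tangent-space criterion for $\Sing(X_0)$ is right, and your ``vertex part'' argument (showing $\Gr(2,V_5)\cap\P(K)=\vide$ using only smoothness of $M'$) is complete and valid. The genuine gap is exactly where you say the real work lies: the ``tangency part'' is never proved. Having reduced to excluding configurations $[\eta]\in Y$, $\omega\in\bw2{(P_\eta^\perp)}$ with $q_0(\omega)=\eta$, you offer only a plan --- translate them into degeneration data of the quintic del Pezzo fibration and invoke the no-lines/no-conics and ampleness hypotheses, which you assert ``are precisely what is needed''. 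No such translation is carried out, and the assertion itself is off target: those hypotheses play no role in this lemma (in the paper they are used later, e.g.\ for Lemma~\ref{lemma:d2} and Theorem~\ref{theorem:main}); what kills the tangency configurations is smoothness of $Y$ and $M'$ alone.

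In fact you stopped one step short: the tangency part follows by dualizing your own vertex argument. Suppose $[\omega]$ is a tangency singularity and set $\eta:=q_0(\omega)$; as you showed, $[\eta]\in Y$ and $\omega\in\bw2{(P_\eta^\perp)}$. Since $M'$ is a smooth threefold, its affine tangent space at $[\eta]$ is the $4$-dimensional space $T:=(P_\eta\wedge V_5^\vee)\cap W'$. The polar hyperplane of $Q'$ at $\eta$ is cut out on $W'$ by pairing with $q'(\eta)\in\BW_0=W^{\prime\vee}$, and $\omega$ is a lift of $q'(\eta)$ to $\bw2{V_5}$ (this is precisely what $q_0(\omega)=\eta$ means); since every element of $\bw2{(P_\eta^\perp)}$ pairs to zero with every element of $P_\eta\wedge V_5^\vee$, this hyperplane contains all of $T$. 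Hence the Zariski tangent space of $Y=M'\cap Q'$ at $[\eta]$ equals $T$, of projective dimension $3>2=\dim Y$, so $Y$ is singular at $[\eta]$, contradicting \eqref{eq:assumptions}. With this insertion your proof becomes complete, and it is genuinely different from the paper's: the paper instead invokes the Lagrangian-data formalism of \cite{DK1} --- $X$ is generalized dual to $Y$, smoothness of $Y$ and $M'$ implies that the Lagrangian $A^\perp$ contains no decomposable vectors by \cite[Theorem~3.14]{DK1}, hence neither does $A$, hence $X$ is smooth --- which is shorter but rests on the full machinery of that reference, whereas your route (once completed as above) is elementary and self-contained, using only the smoothness assumptions in \eqref{eq:assumptions}.
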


\begin{proof}
 The argument of~\cite[Proposition~3.26]{DK1} shows that $X$ is  generalized dual to $Y$ (see also~\cite[{Definition~4.7}]{KP}).\
In particular, if $A \subset \bw3V_6$ is the Lagrangian subspace corresponding to $X$,  the Lagrangian subspace corresponding to $Y$ is $A^\perp \subset \bw3V_6^\vee$.\
Since both $Y$ and  $M'$ are smooth, $A^\perp$ contains no decomposable vectors by~\cite[Theorem~3.14]{DK1}.\
It follows   that $A$ has no decomposable vectors either, hence $X$ is smooth, again by~\cite[Theorem~3.14]{DK1}.
\end{proof}

 The GM sixfolds $X$ constructed in this way  form a family of codimension 1 in the coarse moduli space of all GM sixfolds.

\subsection{The quadrics}

According to~\cite[Lemma~2.31]{DK1}, the restriction from $\C \oplus \bw2V_5$  to its subspace $\bw2V_5  $ defines an isomorphism between the space  of quadratic equations of $X $ in $ \P(\C \oplus \bw2V_5)$ and the space  of quadratic equations of $X_0 $ in $ \P(\bw2V_5)$.\
Let $Q \subset \P(\C \oplus \bw2V_5)$ be the quadric that corresponds to $Q_0$.\ 
The kernel space of $Q$ equals the kernel space $K$ of $Q_0$ so, setting $\BW := (\C \oplus \bw2V_5)/K = \C \oplus \BW_0$, we have
\begin{equation*}
Q = \cone{\P(K)}\BBQ,
\end{equation*}
where $\BBQ \subset \P(\BW)$ is a non-degenerate quadric.\
So we have   diagrams of spaces and quadrics 
\begin{equation}\label{diagram-quadrics}
\vcenter{\xymatrix{
Q_0 \ar@{^{(}->}[r] \ar@{-->}[d] & 
Q \ar@{-->}[d] \\
\BBQ_0 \ar@{^{(}->}[r] & 
\BBQ
}}
\qquad \subset \qquad
\vcenter{\xymatrix{
\P(\bw2V_5)\, \ar@{^{(}->}[r] \ar@{-->}[d] & 
\P(\C \oplus \bw2V_5) \ar@{-->}[d] \\
\P(\BW_0)\, \ar@{^{(}->}[r] & 
\P(\BW),
}}
 \end{equation}
where the horizontal arrows are smooth hyperplane sections  and the vertical arrows are linear projections from $\P(K)$.\
The quadrics $Q' \subset \P(W')$ and $\BBQ_0 \subset \P(\BW_0)$ are projectively dual.\
 {Thus} $Q'$, $\BBQ_0$, and $\BBQ$ are smooth quadric hypersurfaces in $\P^6$, $\P^6$, and $\PP^7$ respectively, while $Q_0$ and $Q$ are quadric hypersurfaces of corank 3 in $\P^9$ and $\P^{10}$ respectively.

In particular, $\BBQ\subset \P(\BW)$ is a smooth quadric of dimension~6.\
We choose one of the two connected components, $\OGr_+(4,\BW)$, of the corresponding orthogonal Grassmannian  and we consider the corresponding connected component $\OFl_+(1,4;\BW)$ of the orthogonal flag variety 
\begin{equation*}
\xymatrix{
& \OFl_+(1,4;\BW) \ar[dl]_{p_\BBQ} \ar[dr]^{q_\BBQ} \\
\BBQ && \OGr_+(4,\BW),
}
\end{equation*}
where $p_\BBQ$ and $q_\BBQ$ are both $\P^3$-fibrations: $q_\BBQ$ is the projectivization of the tautological  {rank-4} bundle $\cT_4$ on $\OGr_+(4,\BW)$ and $p_\BBQ$ is the projectivization of the spinor bundle $\rS_\BBQ$.\  
 The scheme 
\begin{equation}\label{eq:define-b}
B := \OGr_+(4,\BW)
\end{equation}
is isomorphic to a 6-dimensional quadric (it will be the base for the family of quintic del Pezzo threefolds considered later).\
The above diagram takes the form 
\begin{equation}\label{diagram:bq}
\vcenter{\xymatrix{
& \P(\rS_\BBQ) \ar[dl]_{p_\BBQ} 
\ar@{=}[r] & \P_B(\cT_4) 
\ar[dr]^{q_\BBQ} \\
\BBQ &&& B.
}}
\end{equation}
We transfer this diagram to the other quadrics  by using the maps in~\eqref{diagram-quadrics}
\begin{equation*}
\vcenter{\xymatrix@C=1.1em{
& \P(\rS_{\BBQ_0}) \ar[dl]_-{p_{\BBQ_0}} 
\ar@{=}[r] & \P_B(\cT_3) 
\ar[dr]^-{q_{\BBQ_0}} \\
\BBQ_0 &&& B
}}
\qquad
\vcenter{\xymatrix@C=1.1em{
& \P(\rS_{Q_\reg})\, \ar[dl]_-{p_{Q}} 
\ar@{^{(}->}[r] & \P_B((K\otimes \cO_B) \oplus \cT_4)
\ar[dr]^-{q_{Q}} \\
Q_{\reg} &&& B,
}}
\end{equation*}
where $Q_\reg = Q \setminus \P(K)$ is the smooth locus of $Q$, 
$\rS_{\BBQ_0}$ and $\rS_{Q_\reg}$ are the pullbacks of $\rS_\BBQ$ via the maps $\BBQ_0 \hookrightarrow \BBQ$ and $Q_\reg \twoheadrightarrow \BBQ$, and 
\begin{equation*}
\cT_3 = \cT_4 \cap (\BW_0 \otimes \cO_B) 
\end{equation*}
is a rank-3 bundle on $B$.
This identifies $B$ with the orthogonal Grassmannian $\OGr(3,\BW_0)$ of the  {5-dimensional} quadric $\BBQ_0$, and $\cT_3$ with its tautological bundle.

The maps $p$ in the diagrams are all $\P^3$-fibrations.
The map $q_{\BBQ_0}$ is a $\P^2$-fibration  {and} the map $q_Q$ is a $\P^6$-fibration.

The projective duality between $\BBQ_0$ and $Q'$ produces a spinor bundle $\rS_{Q'}$ on $Q'$ and a diagram
\begin{equation}\label{diagram:qp}
\vcenter{\xymatrix{
& \P(\rS_{Q'}) \ar[dl]_-{p_{Q'}} 
\ar@{=}[r] & \P_B(\cR) 
\ar[dr]^-{q_{Q'}} \\
{Q'} &&& B,
}}
\end{equation}
where 
\begin{equation}\label{eq:define-j3}
\cR := ((K\otimes \cO_B) \oplus \cT_4)^\perp \subset W' \otimes \cO_B
\end{equation} 
is a rank-3 subbundle and we use the natural identification $K^\perp = W'$.

We describe  in the next lemma the restrictions of diagrams~\eqref{diagram:bq} and~\eqref{diagram:qp} to the {GM varieties} $X \subset Q_\reg$ and $Y \subset Q'$.\ 
 We set $\rS_X :=\rS_{Q_\reg}\vert_X$ and $\rS_Y := \rS_{Q'}\vert_Y$, and we denote by $p_X$, $q_X$, $p_Y$, and $q_Y$ the restrictions of the maps $p_Q$, $q_Q$ to~$\P(\rS_X)$, and $p_{Q'}$, $q_{Q'}$ to $\P(\rS_Y)$.

\begin{lemm}\label{lemma:fibers-q}
Assume that \eqref{eq:assumptions} holds  {and consider the diagram}
 \begin{equation}
\vcenter{\xymatrix{
& \P(\rS_X) \ar[dl]_{p_X} \ar[dr]^{q_X} 
&& \P(\rS_Y) \ar[dl]_{q_Y} \ar[dr]^{p_Y}
\\
X && B && Y.
}}
\end{equation} 
The maps $p_X$ and $p_Y$ are $\P^3$-fibrations.\
Moreover, if ${R}_b \subset W' \subset \bw2V_5^\vee$ is the fiber of  $\cR$ at a point $b \in B$ and ${R}_b^\bot \subset \bw2V_5$ its orthogonal, we have
\begin{equation*}
q_X^{-1}(b) \cong \Gr(2,V_5) \cap \P({R}_b^\perp)
\qquad \text{and}\qquad
q_Y^{-1}(b) \cong \Gr(2,V^\vee_5) \cap \P({R}_b).
\end{equation*}
 The map $q_Y$ is finite 
  and the map $q_X$ is flat of relative dimension~$3$.
\end{lemm}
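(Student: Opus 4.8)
The plan is to pull the two $\P^3$-fibrations of diagrams~\eqref{diagram:bq}--\eqref{diagram:qp} back to $X$ and $Y$, read off each fiber of $q_X$ and $q_Y$ explicitly, and then deduce flatness and finiteness from the resulting descriptions. The two $\P^3$-fibration claims are immediate: by definition $\rS_X=\rS_{Q_\reg}|_X$ and $\rS_Y=\rS_{Q'}|_Y$ are the restrictions of the rank-$4$ spinor bundles of the smooth quadrics $\BBQ$ (dimension $6$) and $Q'$ (dimension $5$), so $p_X$ and $p_Y$ are projectivizations of rank-$4$ bundles.

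To compute the fiber of $q_X$ over $b\in B$, recall that $\P(\rS_X)=p_Q^{-1}(X)$, and that $p_Q$ maps the fiber $q_Q^{-1}(b)$ isomorphically onto the preimage in $Q_\reg$ of the linear space $\P((\cT_4)_b)\subset\BBQ$, namely $\P(\widetilde U_b)\setminus\P(K)$, where $\widetilde U_b\subset\C\oplus\bw2V_5$ is the rank-$7$ preimage of the isotropic subspace $(\cT_4)_b$. Hence $q_X^{-1}(b)$ is identified with $X\cap\P(\widetilde U_b)$ (using $X\cap\P(K)=\emptyset$), and since $X=\CGr(2,V_5)\cap Q$ while $\P(\widetilde U_b)\subset Q$ (as $(\cT_4)_b$ is isotropic), this is $\CGr(2,V_5)\cap\P(\widetilde U_b)$. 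The vertex $\mathsf v$ of the cone is not on $\P(\widetilde U_b)$ — the $\C$-direction is anisotropic for $\BBQ$, hence not contained in $(\cT_4)_b$ — so projection from $\mathsf v$ carries this intersection isomorphically onto $\Gr(2,V_5)\cap\P(\Omega_b)$, where $\Omega_b\subset\bw2V_5$ is the image of $\widetilde U_b$. Finally, from the definition $\cR=((K\otimes\cO_B)\oplus\cT_4)^\perp$ one gets $R_b=\Omega_b^\perp$, using $K\subset\Omega_b$ and $K^\perp=W'$; therefore $\Omega_b=R_b^\perp$ and $q_X^{-1}(b)\cong\Gr(2,V_5)\cap\P(R_b^\perp)$.

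The fiber of $q_Y$ is obtained the same way, but more directly from diagram~\eqref{diagram:qp}: the fiber $q_{Q'}^{-1}(b)=\P(R_b)$ is, by the projective-duality construction, an isotropic plane $\P(R_b)\subset Q'\cap\P(W')$, so intersecting with $Y=\Gr(2,V_5^\vee)\cap\P(W')\cap Q'$ yields $q_Y^{-1}(b)\cong Y\cap\P(R_b)=\Gr(2,V_5^\vee)\cap\P(R_b)$. Finiteness of $q_Y$ is then immediate from~\eqref{eq:assumptions}: this fiber is a linear section of $\Gr(2,V_5^\vee)$ by the plane $\P(R_b)=\P^2$, and if it were positive-dimensional it would be a plane conic or a plane entirely contained in $\Gr(2,V_5^\vee)$, hence a conic or a plane contained in $Y$ — impossible, since $Y$ contains no conic and, being a K3 surface, is not a plane. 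Thus $q_Y$ is quasi-finite, and being proper it is finite.

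The flatness of $q_X$ is the heart of the matter, and I expect it to be the main obstacle. As $\P(\rS_X)$ is smooth of dimension $9$ (a $\P^3$-bundle over the smooth sixfold $X$) and $B$ is smooth of dimension $6$, by miracle flatness it suffices to check that every fiber has dimension exactly $3$. Each fiber $\Gr(2,V_5)\cap\P(R_b^\perp)$ is a codimension-$3$ linear section of $\Gr(2,V_5)$, hence of dimension at least $3$, and the only thing to rule out is a jump to dimension $4$. I would argue that such a jump forces $\P(R_b^\perp)$ to contain a four-dimensional sub-quadric $\Gr(2,V_4)\subset\Gr(2,V_5)$, equivalently $R_b\subset(\bw2V_4)^\perp=\phi\wedge V_5^\vee$ for some $\phi\in V_5^\vee$; then $\P(R_b)$ lies in the $\alpha$-plane $\P(\phi\wedge V_5^\vee)\subset\Gr(2,V_5^\vee)$, so $q_Y^{-1}(b)=\Gr(2,V_5^\vee)\cap\P(R_b)=\P(R_b)$ is two-dimensional, contradicting the finiteness of $q_Y$ just proved. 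The one delicate input is the claim that a four-dimensional linear section of $\Gr(2,V_5)$ spanning at most a $\P^6$ must contain a sub-quadric $\Gr(2,V_4)$; this I would settle by a minimal-degree classification of four-dimensional subvarieties of $\Gr(2,V_5)$ (the only one of degree $\le 2$, and the only one spanning merely a $\P^5$, is $\Gr(2,V_4)$). Granting it, all fibers of $q_X$ are three-dimensional and $q_X$ is flat of relative dimension~$3$.
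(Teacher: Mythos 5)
Your description of the fibers, the two $\P^3$-fibration claims, and the finiteness of $q_Y$ are all correct and follow the same lines as the paper's much terser proof (one small imprecision: a positive-dimensional intersection of conics in $\P^2$ can also have a \emph{line} as common component, a case you omit, but it is equally excluded since $Y$ contains no lines). The paper likewise reduces flatness of $q_X$ to showing that every fiber has dimension exactly $3$. The genuine gap is precisely at the step you flagged as delicate, and the claim you propose there is false: a four-dimensional component of $\Gr(2,V_5)\cap\P(R^\perp)$, with $\dim R=3$, does \emph{not} force $\Gr(2,V_4)\subset\P(R^\perp)$. Indeed, take $R=\bw2W_3$ for a $3$-dimensional subspace $W_3\subset V_5^\vee$ and let $W_2=W_3^\perp\subset V_5$; then $R^\perp=W_2\wedge V_5$ and $\Gr(2,V_5)\cap\P(R^\perp)=\{[U]\mid U\cap W_2\neq 0\}$ is the four-dimensional Schubert variety $\sigma_2(W_2)$, a cone over $\P^1\times\P^2$ with vertex the point $[\bw2W_2]$: it has degree $3$ and spans all of $\P(R^\perp)\cong\P^6$, so it escapes your minimal-degree dichotomy (degree $\le 2$, or span $\le\P^5$), and no $\Gr(2,V_4)$ is contained in $\P(W_2\wedge V_5)$ (that would mean $\bw2W_3\subset\phi\wedge V_5^\vee$ for some $\phi$, i.e., a single $\phi$ dividing every element of $\bw2W_3$, which is impossible). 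So your argument does not rule out dimension jumps of this second type.

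Your overall strategy is repairable, because in the missed case $\P(R_b)=\P(\bw2W_3)=\Gr(2,W_3)$ is a plane entirely contained in $\Gr(2,V_5^\vee)$, so $q_Y^{-1}(b)$ would again be two-dimensional, contradicting the finiteness of $q_Y$ exactly as in your first case. But you would then owe a proof that these two cases ($R_b\subset\phi\wedge V_5^\vee$ for some $\phi$, or $R_b=\bw2W_3$ for some $W_3$) exhaust all ways the fiber dimension can jump, and that is the real content of the step: it amounts to classifying the four-dimensional components of codimension-$2$ and $3$ linear sections of $\Gr(2,V_5)$ (degree bookkeeping via refined Bezout, generic reducedness, the facts that quadric fourfolds in $\Gr(2,V_5)$ are the $\Gr(2,V_4)$'s and that degree-$3$ fourfolds are the $\sigma_2(W_2)$'s). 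The paper sidesteps all of this by quoting projective duality, \cite[Proposition~2.22]{DK1}, through its Lemma~\ref{lemma:flatness}: since $\P(R_b)\cap\Gr(2,V_5^\vee)$ is finite, one can choose a line $\P(R')\subset\P(R_b)$ disjoint from $\Gr(2,V_5^\vee)$; by that proposition $\Gr(2,V_5)\cap\P(R'^{\perp})$ is a smooth, dimensionally transverse (hence irreducible and nondegenerate) fourfold, and $q_X^{-1}(b)=\Gr(2,V_5)\cap\P(R_b^\perp)$ is a hyperplane section of it, hence of pure dimension $3$. If you want a self-contained argument, that duality statement is what you should prove or cite in place of your classification claim.
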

 
\begin{proof}
The description of the fibers is straightforward: since $Y = \Gr(2,V_5^\vee) \cap Q'$, the fibers of~$q_Y$ are the intersections of $\Gr(2,V_5^\vee)$ with the fibers $\P(R_b)$ of $q_{Q'}$; the case of $q_X$ is analogous. 

Any fiber $q_Y^{-1}(b)$ is finite: if not, since it is an intersection of quadrics in  $\P({R}_b) \cong \P^2$, it contains a line or a conic;\
but it is contained in the GM surface $Y$, and this contradicts~\eqref{eq:assumptions}.\
The proper morphism $q_Y$  is  therefore finite.\ Finally, the map $q_X$ is flat by Lemma~\ref{lemma:flatness} below. 
\end{proof}

By Lemma~\ref{lemma:fibers-q}, the fibers of the map $\P(\rS_X) \to B$ are linear sections of $\Gr(2,V_5)$ of codimension 3.
A general fiber is a smooth quintic del Pezzo threefold  and   the discriminant locus of   $q_X$ is  $q_Y(\P(\rS_Y)) \subset B$  ({see} \cite[Proposition~2.22]{DK1}).

\section{Families of quintic del Pezzo threefolds}
\label{section:families}

In this section, we discuss a stable rationality construction for general families of quintic del Pezzo threefolds which we will later apply to the family constructed in Section~\ref{section:gm-duality}. 

\subsection{Families of quintic del Pezzo threefolds}\label{se31}

 Let $B$ be a Cohen--Macaulay scheme and let $\cR \subset \bw2V_5^\vee  \otimes \cO_B$ be a rank-3 subbundle (in Section~\ref{section:gm-6folds}, we will take $B$ to be 
the 6-dimensional quadric~\eqref{eq:define-b} and define the subbundle $\cR$ by~\eqref{eq:define-j3}) with orthogonal complement 
\begin{equation*}
\cR^\perp := \Ker(\bw2V_5 \otimes \cO_B \twoheadrightarrow \cR^\vee) \subset \bw2V_5 \otimes \cO_B.
\end{equation*}
The next lemma shows that the map 
\begin{equation*}
\cM := \P_B(\cR^\perp) \times_{\P({\sbw2V_5)}} \Gr(2,V_5) \to B 
\end{equation*}
is a family of quintic del Pezzo threefolds.
For $b \in B$, we denote by~${R}_b \subset \bw2V_5^\vee$ and ${R}_b^\perp \subset \bw2V_5$ the fibers of $\cR$ and $\cR^\perp$ at $b$.

\begin{lemm}\label{lemma:flatness}
Assume that the intersection $\P({R}_b) \cap \Gr(2,V_5^\vee)$ is finite for every   $b \in B$ and empty for general $b$.
The map $\cM \to B$ is then flat of relative dimension~$3$ with general fiber a smooth quintic del Pezzo threefold.
\end{lemm}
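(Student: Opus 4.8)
The plan is to establish flatness of $\cM \to B$ by exhibiting each fiber as a linear section of $\Gr(2,V_5)$ of the correct codimension, and then to invoke the standard principle that a family of complete intersections of constant dimension over a Cohen--Macaulay base is flat. Concretely, for $b \in B$ the fiber $\cM_b$ is
\begin{equation*}
\cM_b = \P({R}_b^\perp) \cap \Gr(2,V_5) \subset \P(\bw2V_5),
\end{equation*}
where ${R}_b^\perp \subset \bw2V_5$ has codimension $3$ (it is the orthogonal of the rank-$3$ space ${R}_b$). Since $\Gr(2,V_5)$ has dimension $6$ in $\P(\bw2V_5) = \P^9$, the expected dimension of a codimension-$3$ linear section is $3$. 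The main point is therefore to show that each fiber really has dimension exactly $3$, i.e.\ that the linear section is dimensionally transverse, for this is what upgrades the naive expected dimension to an honest one and forces flatness.

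First I would translate the hypothesis on $\P({R}_b) \cap \Gr(2,V_5^\vee)$ into a statement about the fibers $\cM_b$ via projective duality. The key classical fact (see~\cite[Proposition~2.22]{DK1}, already cited after Lemma~\ref{lemma:fibers-q}) is that for a linear subspace $L \subset \P(\bw2V_5)$, the linear section $L \cap \Gr(2,V_5)$ fails to be dimensionally transverse (has dimension jumping up) precisely when the orthogonal subspace $L^\perp \subset \P(\bw2V_5^\vee)$ meets the dual Grassmannian $\Gr(2,V_5^\vee)$ along a positive-dimensional locus. Here $L = \P({R}_b^\perp)$ has orthogonal $\P({R}_b)$, so the hypothesis that $\P({R}_b) \cap \Gr(2,V_5^\vee)$ is finite for every $b$ is exactly the condition that every fiber $\cM_b$ is dimensionally transverse, hence has pure dimension $3$; the emptiness for general $b$ says the general fiber is a smooth codimension-$3$ linear section, i.e.\ a smooth quintic del Pezzo threefold.

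Having pinned down that every fiber has dimension exactly $3$, I would conclude flatness as follows. The total space $\cM = \P_B(\cR^\perp) \times_{\P(\bw2V_5)} \Gr(2,V_5)$ sits inside the projective bundle $\P_B(\cR^\perp)$ as the zero locus of the section of a suitable vector bundle cutting out $\Gr(2,V_5)$ (the Plücker equations, pulled back relatively). Since $\P_B(\cR^\perp) \to B$ is smooth of relative dimension $6$ with $B$ Cohen--Macaulay, the relative complete intersection $\cM$, being cut out by equations whose vanishing locus has the expected codimension in every fiber, is itself Cohen--Macaulay and equidimensional over $B$. A morphism from a Cohen--Macaulay scheme to a regular (or here, reduced equidimensional Cohen--Macaulay) base with all fibers of the same dimension equal to $\dim \cM - \dim B$ is flat by the miracle-flatness criterion; this delivers flatness of $\cM \to B$ of relative dimension $3$, and the general-fiber statement was already obtained above.

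The step I expect to be the main obstacle is the dual-Grassmannian dimension count, namely verifying cleanly that \emph{finiteness} of $\P({R}_b) \cap \Gr(2,V_5^\vee)$ is equivalent to (or at least implies) \emph{dimensional transversality} of $\cM_b = \P({R}_b^\perp) \cap \Gr(2,V_5)$. This requires the excess-dimension bound for linear sections of the Grassmannian and its dual in terms of the incidence between the orthogonal linear space and the dual variety; one must check that a positive-dimensional jump in $\dim \cM_b$ would force $\P({R}_b)$ to meet $\Gr(2,V_5^\vee)$ in positive dimension, contradicting the hypothesis. Once that equivalence is in hand via~\cite[Proposition~2.22]{DK1}, the remaining flatness argument is the routine Cohen--Macaulay/constant-fiber-dimension package.
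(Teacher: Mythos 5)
Your overall skeleton does match the paper's: identify the fibers as $\cM_b = \Gr(2,V_5)\cap\P(R_b^\perp)$, show that every fiber has dimension exactly $3$, and conclude flatness by the Cohen--Macaulay/constant-fiber-dimension package (the paper compresses this last step into ``the flatness easily follows''). The gap is exactly the step you flag as the main obstacle, and the citation you propose does not close it. As it is used throughout this paper, [DK1, Proposition~2.22] says that a linear section of $\Gr(2,V_5)$ is \emph{smooth and dimensionally transverse} if and only if the orthogonal linear section of $\Gr(2,V_5^\vee)$ is so --- and for an orthogonal space of negative expected dimension, such as $\P(R_b)$ or a line inside it, ``smooth and dimensionally transverse'' means \emph{empty}. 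Applied at a point $b$ where $\P(R_b)\cap\Gr(2,V_5^\vee)$ is finite but nonempty, the proposition therefore yields only the negative statement that $\cM_b$ is \emph{not} smooth-and-dimensionally-transverse; it cannot separate failure of smoothness from failure of transversality, and ruling out the latter is precisely what you need. Worse, the ``precisely when'' equivalence you formulate is false: take $R_b$ spanned by the skew forms $v_1^\vee\wedge v_2^\vee$, $v_1^\vee\wedge v_4^\vee - v_2^\vee\wedge v_3^\vee$, $v_3^\vee\wedge v_4^\vee$ (all annihilating $v_5$). Then $\P(R_b)\cap\Gr(2,V_5^\vee)$ is the smooth conic $\{ac=b^2\}$ of rank-$2$ members of this net, yet $\cM_b$ --- the $\P^3$ of $2$-planes containing $v_5$, together with the $3$-dimensional family of graphs of linear maps $U'\to\langle v_5\rangle$, where $U'$ runs over the conic of planes in $V_5/\langle v_5\rangle$ isotropic for the induced net --- has pure dimension $3$, i.e.\ is dimensionally transverse. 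So no equivalence of this shape can simply be quoted; the one implication you need (finite $\Rightarrow$ dimensionally transverse) is true, but it requires an argument, which your proposal defers to the citation and never gives.

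The paper supplies that argument by a short trick absent from your proposal: since $\P(R_b)\cap\Gr(2,V_5^\vee)$ is finite, one can choose a \emph{line} $\P(R')\subset\P(R_b)\cong\P^2$ avoiding it; Proposition~2.22 then applies in its usable (empty) direction and shows that $\Gr(2,V_5)\cap\P(R'^\perp)$ is a smooth, dimensionally transverse (hence irreducible) fourfold; finally $\cM_b = \Gr(2,V_5)\cap\P(R_b^\perp)$ is a hyperplane section of this fourfold, hence of pure dimension $3$. With that step restored, your flatness conclusion is fine in the paper's application, where $B$ is a smooth quadric; but note that miracle flatness requires a \emph{regular} base, while the lemma assumes only that $B$ is Cohen--Macaulay. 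In that generality one should argue instead via generic perfection: the pullback to $\P_B(\cR^\perp)$ of a length-$3$ locally free resolution of $\cO_{\Gr(2,V_5)}$ remains exact because the codimension stays equal to $3$, and it restricts to a resolution on every fiber, which gives flatness over any Noetherian base.
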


\begin{proof}
The fiber $\cM_b $ of $\cM$ over a point $b \in B$ is $\P({R}_b^\perp) \cap  \Gr(2,V_5)$.\
To show that  {this intersection is}
 dimensionally transverse, we  choose a line $\P^1 = \P({R}') \subset \P({R}_b) = \P^2$ which does not intersect $\Gr(2,V_5^\vee)$ (this is possible since $\P({R}_b)\cap \Gr(2,V_5^\vee) $ is finite).\
 By~\cite[Proposition~2.22]{DK1}, the  intersection $\Gr(2,V_5) \cap \P({R}^{\prime\perp})$ is then smooth and dimensionally transverse   and the  intersection $\Gr(2,V_5) \cap \P({R}_b^\perp)$ is a hyperplane section, hence is also dimensionally transverse.\
The flatness of the map $\cM \to B$   easily follows.

Finally, when $\P({R}_b) \cap \Gr(2,V_5^\vee)$ is empty, $\cM_b $ is a smooth quintic del Pezzo threefold (again by ~\cite[Proposition~2.22]{DK1}).
\end{proof}

Consider the following composition  
\begin{equation}\label{eq:varphi}
\varphi \colon \cR \hookrightarrow \bw2V_5^\vee \otimes \cO_{{B \times \P(V_5)}} \twoheadrightarrow \Omega_{{\P(V_5)}}(2)
\end{equation}
of morphisms on $B \times \P(V_5)$ (we  omit  the pullback notation for the projections of $B \times \P(V_5)$ to the factors).
 {As we will see later, the geometry of $\cM$ can be described in terms of $\varphi$ and its degeneracy loci.\ 
In what follows, we write $D_k(\varphi) \subset B \times \P(V_5)$ for the corank $\ge k$ locus.}

To analyze  {$D_k(\varphi)$,}
the following observation  {is} useful.\ 
On $\P(V_5)$, there is a canonical exact sequence
\begin{equation*}
0 \to \Omega_{\P(V_5)}^2(2) \to \bw2V_5^\vee \otimes \cO_{\P(V_5)} \to \Omega_{\P(V_5)}(2) \to 0 
\end{equation*}
whose restriction to a point $v \in \P(V_5)$ is the exact sequence
\begin{equation*}
0 \to \bw2(v^\perp) \to \bw2V_5^\vee \to v^\perp \to 0,
\end{equation*}
where the second map is the contraction with $v$.
 {It follows that
\begin{equation}\label{eq:ker-varphi-bv}
\Ker (\varphi_{b,v}) = {R}_b \cap \bw2(v^\perp),
\end{equation}
where $\varphi_{b,v}$ is the fiber of $\varphi$ at $(b,v) \in B \times \P(V_5)$.}

\begin{lemm}\label{lemma:d3}
Assume  that for every point $b \in B$, the intersection $\P(R_b) \cap \Gr(2,V_5^\vee)$ 
 is finite.
Then $D_3(\varphi) = \emptyset$.
\end{lemm}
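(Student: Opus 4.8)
The plan is to show that the corank of $\varphi_{b,v}$ never reaches $3$, by relating $\Ker(\varphi_{b,v})$ to the geometry of $\P(R_b)\cap\Gr(2,V_5^\vee)$. Using the identification \eqref{eq:ker-varphi-bv}, we have $\Ker(\varphi_{b,v}) = R_b\cap\bw2(v^\perp)$, so a corank-$3$ point would force $\dim\bigl(R_b\cap\bw2(v^\perp)\bigr) = 3$, \ie\ $R_b\subset\bw2(v^\perp)$ entirely (since $\dim R_b = 3$). Thus the whole plane $\P(R_b)$ would sit inside $\P(\bw2(v^\perp))$, which is the Pl\"ucker space of $\Gr(2,v^\perp)\subset\Gr(2,V_5^\vee)$.

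First I would unwind what $R_b\subset\bw2(v^\perp)$ means geometrically. The subvariety $\Gr(2,v^\perp)$ is the intersection $\Gr(2,V_5^\vee)\cap\P(\bw2 v^\perp)$ inside $\P(\bw2 V_5^\vee)$; it is a smooth quadric fourfold (a linear section of the Grassmannian, or equivalently a $\Gr(2,4)$). The key step is then to compare the plane $\P(R_b)$ with this quadric fourfold. Since $\P(R_b)\subset\P(\bw2 v^\perp)\cong\P^5$ and $\Gr(2,v^\perp)\subset\P^5$ is a smooth quadric of dimension $4$, any projective plane $\P(R_b)\cong\P^2$ inside $\P^5$ meets the quadric fourfold in a nonempty subscheme (a plane and a quadric fourfold in $\P^5$ must intersect, since $2+4=6>5$). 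This intersection is contained in $\Gr(2,v^\perp)\subset\Gr(2,V_5^\vee)$, hence lies in $\P(R_b)\cap\Gr(2,V_5^\vee)$.

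The main obstacle — and really the crux — is to rule out not just nonemptiness but \emph{positive-dimensionality} of the intersection, since the hypothesis of the lemma only forbids $D_3$ via the finiteness of $\P(R_b)\cap\Gr(2,V_5^\vee)$. So I would sharpen the previous step: a plane $\P^2$ meeting a smooth quadric fourfold $\BBQ\subset\P^5$ does so in a conic (the restriction of the quadratic form to the plane), which is a curve unless the form vanishes identically on the plane. In either case the intersection has dimension $\ge 1$. This curve is contained in $\P(R_b)\cap\Gr(2,V_5^\vee)$, contradicting the finiteness assumption of the lemma. Therefore $R_b\subset\bw2(v^\perp)$ is impossible for any $v$, so $\dim\Ker(\varphi_{b,v})\le 2$ for all $(b,v)$, which is exactly $D_3(\varphi)=\emptyset$.

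To write this cleanly I would state it as: if $(b,v)\in D_3(\varphi)$, then $R_b\cap\bw2(v^\perp) = R_b$ by \eqref{eq:ker-varphi-bv} and the corank-$3$ condition, so $\P(R_b)\subset\P(\bw2 v^\perp)$; intersecting the plane $\P(R_b)$ with the quadric $\BBQ=\Gr(2,v^\perp)$ in $\P^5$ yields a conic $C\subset\P(R_b)\cap\Gr(2,V_5^\vee)$, which is a curve, contradicting finiteness. Hence $D_3(\varphi)=\vide$. The only subtlety worth flagging in the proof is confirming that $\Gr(2,v^\perp)$ really is cut out in $\P(\bw2 v^\perp)\cong\P^5$ by a single (smooth) quadric — this is the classical Pl\"ucker description of $\Gr(2,4)$ as a smooth quadric fourfold — so that the conic $C$ is genuinely positive-dimensional rather than possibly empty.
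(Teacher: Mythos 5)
Your proposal is correct and follows essentially the same route as the paper's proof: identify $\Ker(\varphi_{b,v})$ with $R_b\cap\bw2(v^\perp)$, deduce $R_b\subset\bw2(v^\perp)$ at a corank-$3$ point, and observe that the plane $\P(R_b)$ then meets the Pl\"ucker quadric $\Gr(2,v^\perp)\subset\P(\bw2(v^\perp))\cong\P^5$ in a conic, which is positive-dimensional and contradicts finiteness. Your extra care in noting that the conic could degenerate to the whole plane (if the quadratic form vanishes identically on $\P(R_b)$) is a harmless refinement the paper leaves implicit; in either case the intersection has dimension at least~$1$.
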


\begin{proof}
Assume $(b,v) \in D_3(\varphi)$.\
By~\eqref{eq:ker-varphi-bv}, we have inclusions ${R}_b \subset \bw2(v^\perp) \subset \bw2V_5^\vee$,
 {hence} $\P({R}_b) \cap \Gr(2,V_5^\vee) = \P({R}_b) \cap \Gr(2,v^\perp)$.\
Since 
 $\Gr(2,v^\perp) \subset \P(\bw2(v^\perp))$ is a quadric hypersurface, this intersection is a conic in $\P({R}_b)$.\
This contradicts the assumption  {of finiteness}; therefore, $D_3(\varphi) = \emptyset$.
\end{proof}

\subsection{A stable rationality construction}
We keep the  assumptions and notation of Section~\ref{se31}.\
In addition, we denote by $\cU \subset V_5 \otimes \cO_{\!\cM}$ the pullback of the tautological rank-2 bundle on~$\Gr(2,V_5)$ to~$\cM$ and consider the natural map 
\begin{equation}\label{eq:f}
f \colon \P_{\!\cM}(\cU) \to B \times \P(V_5).
\end{equation}
The next proposition shows that under appropriate assumptions on $\varphi$, this map  is birational.

\begin{prop}\label{proposition:quintic-fibration}
Assume $\codim(D_k(\varphi)) \ge k+1$ for all $k \ge 1$.\ 
The morphism $f$ is the blow up of $D_1(\varphi)\subset B \times \P(V_5)$ and is 
 a $\P^k$-fibration over $D_k(\varphi) \setminus D_{k+1}(\varphi)$. 
\end{prop}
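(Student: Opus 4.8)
The plan is to analyze the map $f \colon \P_{\!\cM}(\cU) \to B \times \P(V_5)$ fiber by fiber over a point $(b,v)$ and to identify $\P_{\!\cM}(\cU)$ with the projectivization of $\Ker(\varphi)$, which—because of the corank conditions—turns out to be precisely the blow-up of the degeneracy locus $D_1(\varphi)$. The first step is to unwind the source space. A point of $\P_{\!\cM}(\cU)$ lying over $(b,v)$ consists of a point $[U] \in \cM_b = \P(R_b^\perp) \cap \Gr(2,V_5)$ together with a line in $U \subset V_5$; since $U$ is a two-dimensional subspace of $V_5$ with $\bw2 U \in \P(R_b^\perp)$, the condition that the chosen line in $U$ pass through $v$ forces $v \in U$. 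Thus the fiber $f^{-1}(b,v)$ is identified with the set of two-planes $U \subset V_5$ containing $v$ and satisfying $\bw2 U \in \P(R_b^\perp)$.

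Next I would translate this into the language of $\varphi$. A two-plane $U \ni v$ corresponds, after contracting with $v$, to a line in $v^\perp$, i.e. to a point of $\P(v^\perp) \cong \P^3$; conversely a line $\ell \subset v^\perp$ gives back $U = \langle v \rangle \oplus \tilde\ell$. The condition $\bw2 U \in R_b^\perp$ means $\bw2 U$ annihilates $R_b$, which via the contraction sequence $0 \to \bw2(v^\perp) \to \bw2 V_5^\vee \to v^\perp \to 0$ becomes the statement that the corresponding element of $v^\perp$ lies in the image of $R_b$ under $\varphi_{b,v} \colon R_b \to v^\perp$. Using the identification~\eqref{eq:ker-varphi-bv}, the fiber $f^{-1}(b,v)$ is therefore naturally $\P(\Ker(\varphi_{b,v})^\perp)$ inside $\P(v^\perp)$—more precisely, the set of lines in $v^\perp$ orthogonal to the image of $\varphi_{b,v}$. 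When $(b,v) \notin D_1(\varphi)$, the map $\varphi_{b,v}$ is injective, its image is three-dimensional inside the four-dimensional $v^\perp$, and the fiber is a single point; when $(b,v) \in D_k(\varphi) \setminus D_{k+1}(\varphi)$, the image has dimension $3-k$, so the fiber is a $\P^k$. This already gives the fiber-dimension statement and shows $f$ is an isomorphism away from $D_1(\varphi)$.

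To upgrade this pointwise picture to the claim that $f$ is \emph{the} blow-up of $D_1(\varphi)$, I would argue that $\P_{\!\cM}(\cU)$ is globally the projectivization of $\Coker(\varphi)^\vee$ (or equivalently the relevant Fitting/kernel sheaf) over $B \times \P(V_5)$, and invoke the standard description of a degeneracy locus of expected codimension. Concretely, $\varphi \colon \cR \to \Omega_{\P(V_5)}(2)$ is a map of vector bundles of ranks $3$ and $4$; under the hypothesis $\codim D_k(\varphi) \ge k+1$ the loci $D_k(\varphi)$ have the expected codimension, so $\coker \varphi$ is a sheaf whose projectivization is irreducible, and the universal property of blowing up the maximal minors ideal of $\varphi$ identifies $f$ with $\Bl_{D_1(\varphi)}(B \times \P(V_5))$. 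I expect the main obstacle to be precisely this last globalization step: verifying that the scheme $\P_{\!\cM}(\cU)$ matches the blow-up \emph{scheme-theoretically} (not merely that their closed points agree), which requires checking that $\cU$ pulls back to the expected tautological quotient and that the degeneracy-locus codimension hypotheses force the Fitting ideal defining $D_1(\varphi)$ to coincide with the ideal whose blow-up is realized by $f$. The fiberwise linear-algebra identification via~\eqref{eq:ker-varphi-bv} is routine; the genuine work is in confirming the expected-codimension behavior makes $\Coker(\varphi)$ a line bundle supported on $D_1(\varphi)$ away from deeper strata, so that the Rees-algebra/blow-up comparison goes through cleanly.
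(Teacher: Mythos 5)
Your fiberwise analysis is essentially correct and does establish the second assertion: the fiber of $f$ over $(b,v)$ is the set of $2$-planes $U$ with $v \in U$ and $\bw2U \in R_b^\perp$, which is the projectivization of the annihilator of $\Im(\varphi_{b,v})$, hence a $\P^k$ over $D_k(\varphi)\setminus D_{k+1}(\varphi)$. (Note a recurring dual-space slip: this fiber lives in $\P(V_5/\langle v\rangle)$, not in $\P(v^\perp)$, and it is the annihilator of the image, not ``$\P(\Ker(\varphi_{b,v})^\perp)$''; the dimension count you give is nevertheless the right one.) This is in substance the same as the paper's treatment of the second part, which identifies the preimage of each stratum with the projectivization of $\Ker(\varphi^\vee)$ restricted there, a subbundle of rank $k+1$ because the corank is constant on the stratum.

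The problem is the first assertion, which is the heart of the proposition, and which you yourself flag as ``the genuine work'': your proposal never carries it out, and the route you sketch starts from a false premise. You claim that the hypothesis $\codim D_k(\varphi) \ge k+1$ forces the $D_k(\varphi)$ to have \emph{expected} codimension; but for a morphism of bundles of ranks $3$ and $4$ the expected codimension of $D_k$ is $k(k+1)$, so for $k=2$ the hypothesis only guarantees codimension $\ge 3$ while the expected value is $6$ (indeed, in the application of Section~\ref{section:gm-6folds} one must prove separately, in Lemma~\ref{lemma:d2}, that $D_2(\varphi)$ has codimension $6$). Consequently, any ``standard description of a degeneracy locus of expected codimension'', or any linear-type/Rees-algebra statement requiring expected codimension of all strata, is not available under the stated hypothesis, and the universal-property argument you gesture at is left without support. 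What the paper does instead is globalize first and then cite a result tailored to exactly this hypothesis: using $\P_{\Gr(2,V_5)}(\cU) \cong \Fl(1,2;V_5) \cong \P(T_{\P(V_5)}(-2))$, it identifies $\P_{\cM}(\cU)$ with the zero locus of the section of $\cR^\vee \otimes \cO(1)$ on the $\P^3$-bundle $\P_{B \times \P(V_5)}(T_{\P(V_5)}(-2))$ induced by $\varphi^\vee$, and then invokes \cite[Lemma~2.1]{K16}, a blow-up criterion proved precisely under the assumption $\codim D_k \ge k+1$ (this weaker bound is what ensures that the preimages of the strata $D_k$, of dimension $\dim(B\times\P(V_5)) - \codim D_k + k$, are too small to form components, so the zero locus is irreducible of expected dimension and the comparison with $\Bl_{D_1(\varphi)}(B\times\P(V_5))$ can be made scheme-theoretically). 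Without reproducing the content of such a lemma, or citing it with its correct hypotheses, your proposal proves only the fiberwise picture, not that $f$ is the blow up of $D_1(\varphi)$.
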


\begin{proof}
Recall that $\P_{\Gr(2,V_5)}(\cU) \cong \Fl(1,2;V_5) \cong \P(T_{\P(V_5)}(-2))$ and the pullback of the hyperplane class of $\Gr(2,V_5)$ is 
the relative hyperplane class for $\P(T_{\P(V_5)}(-2))$ over $\P(V_5)$.\ 
Therefore,
\begin{equation*}
\P_\cM(\cU) \cong 
\P_{\Gr(2,V_5)}(\cU) \times_{\P(\sbw2V_5)} \P_B(\cR^\perp) \cong
\P(T_{\P(V_5)}(-2)) \times_{\P(\sbw2V_5)} \P_B(\cR^\perp)
\end{equation*}
is the zero-locus of the section of the vector bundle  $\cR^\vee \otimes \cO(1)$ on $\P_{B \times \P(V_5)}(T_{\P(V_5)}(-2))$ induced by $\varphi^\vee$.\
The  {first} part then follows from \cite[Lemma~2.1]{K16} applied to $\varphi^\vee$.

The second part follows from the fact that the preimage of $D_k(\varphi) \setminus D_{k+1}(\varphi)$ is the projectivization of $\Ker(\varphi^\vee\vert_{D_k(\varphi) \setminus D_{k+1}(\varphi)})$,
which is locally free of rank $k+1$. 
\end{proof}

The blow up description of Proposition~\ref{proposition:quintic-fibration} has one drawback: the center   $D_1(\varphi)$ of the blow up  is usually singular.\ 
We include it in a diagram of blow ups of other simpler   loci.

\begin{lemm}\label{lemma:blow ups}
 There is a commutative diagram
 \begin{equation}\label{eq:blowups}
\vcenter{\xymatrix@C=0pt{
\Bl_{\pi_1^{-1}(D_2(\varphi))}(\Bl_{D_1(\varphi)}(B \times \P(V_5)))
  \ar[d]^-{\pi'_2}   \ar@{=}[rr]&& 
\Bl_{\pi_2^{-1}(D_1(\varphi))}(\Bl_{D_2(\varphi)}(B \times \P(V_5))) 
 \ar[d]_-{\pi'_1} 
\\
\Bl_{D_1(\varphi)}(B \times \P(V_5)) 
  \ar[dr]^-{\pi_1}&& \Bl_{D_2(\varphi)}(B \times \P(V_5)) 
  \ar[dl]_-{\pi_2}
 \\
& B \times \P(V_5),
}}
\end{equation}
where all the maps are blow up morphisms.
\end{lemm}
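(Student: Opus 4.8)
The goal is to produce the commutative diagram \eqref{eq:blowups}, whose content is the assertion that blowing up $D_1(\varphi)$ and then the (proper transform of the) locus $D_2(\varphi)$ yields the same variety as blowing up $D_2(\varphi)$ first and then $D_1(\varphi)$, with all four arrows being genuine blow up morphisms. The plan is to reduce everything to the commutativity of blow ups along a nested pair of centers, one contained in the smooth locus of the other. Under the standing hypothesis $\codim D_k(\varphi)\ge k+1$, we have $D_3(\varphi)=\emptyset$ by Lemma~\ref{lemma:d3} (the finiteness assumption being guaranteed by~\eqref{eq:assumptions}), so $D_2(\varphi)$ is exactly the locus where the corank of $\varphi$ equals $2$, and $D_1(\varphi)$ is the full degeneracy locus with $D_2(\varphi)\subset D_1(\varphi)$.

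First I would record the local structure of $\varphi$ near $D_2(\varphi)$. By Proposition~\ref{proposition:quintic-fibration} the morphism $f$ is the blow up of $D_1(\varphi)$ and is a $\P^k$-fibration over $D_k(\varphi)\setminus D_{k+1}(\varphi)$; combined with $D_3=\emptyset$ this says $D_1(\varphi)$ is smooth precisely away from $D_2(\varphi)$, and that along $D_2(\varphi)$ the sheaf $\coker\varphi$ (equivalently $\Ker(\varphi^\vee)$) jumps rank by one. I would argue that $D_1(\varphi)$ and $D_2(\varphi)$ are both smooth of the expected codimensions $3$ and $6$, and that $D_1(\varphi)$ has ordinary behaviour along $D_2(\varphi)$ in the sense that, étale-locally, it looks like the degeneracy locus of a generic $3\times 3$ symmetric or general matrix with $D_2$ the deeper stratum. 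The relevant local model is: a bundle map $\varphi$ with $D_1=\{\rank\le 2\}$, $D_2=\{\rank\le 1\}$ (in the $3\times 3$ world after passing to $\cR$ of rank $3$), so that in suitable coordinates $D_1$ is cut out by a determinant and $D_2$ by the $2\times 2$ minors. The key local fact I need is that in this situation the blow up of $D_1$ is regular away from $D_2$, and that its proper transform of $D_2$, and the exceptional behaviour along $D_2$, are governed by a single well-understood chart.

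Given that local picture, I would invoke the standard commutation lemma for blow ups of nested smooth-enough centers. Concretely: let $Z_1=D_1(\varphi)$ and $Z_2=D_2(\varphi)$ with $Z_2\subset Z_1$. One blows up $Z_1$ first to get $\pi_1\colon \Bl_{Z_1}(B\times\P(V_5))\to B\times\P(V_5)$, then blows up the reduced preimage $\pi_1^{-1}(Z_2)$; in the other order one blows up $Z_2$ first via $\pi_2$, then $\pi_2^{-1}(Z_1)$ — here $\pi_2^{-1}(Z_1)$ means the total transform, which is the union of the exceptional divisor and the proper transform of $Z_1$, but the relevant center is the proper transform (the exceptional divisor over $Z_2$ is already Cartier and blowing it up changes nothing). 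The identification of the two twofold blow ups is then the assertion that these two towers have a common refinement that is in fact an isomorphism. I would establish this by comparing the two constructions to the blow up of the product ideal $\cI_{Z_1}\cdot\cI_{Z_2}$, or equivalently to the closure of the graph of the rational map to a product of the two single blow ups, and checking this is identified with each tower. The cleanest route is to verify the universal property: both twofold blow ups are the terminal object among schemes over $B\times\P(V_5)$ on which both $\cI_{Z_1}$ and $\cI_{Z_2}$ become invertible, which forces the canonical isomorphism along the top of the diagram.

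The main obstacle I anticipate is the control of the centers at their intersection: $Z_2=D_2(\varphi)$ sits inside $Z_1=D_1(\varphi)$ and $Z_1$ is singular there, so the naive "commuting blow ups" lemma for a nested pair of \emph{smooth} subvarieties does not apply verbatim. I would spend the bulk of the effort showing that, because of the symmetric-determinantal local model (rank-drop loci of the map $\varphi\colon\cR\to\Omega(2)$ of a rank-$3$ bundle), the behaviour is nevertheless as nice as in the smooth nested case: namely that $\Bl_{Z_1}$ resolves $Z_1$ with exceptional divisor a $\P^1$-bundle over $Z_1\setminus Z_2$ jumping to a $\P^2$-bundle (or the expected configuration) over $Z_2$, that $\pi_1^{-1}(Z_2)$ is smooth, and that the symmetric statement holds after swapping the order. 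Once this local normal form is in hand, the identification of the two towers and the verification that every arrow in~\eqref{eq:blowups} is a blow up morphism follows from the universal property of blowing up applied to the coherent ideals $\cI_{Z_1}$ and $\cI_{Z_2}$, and the commutativity is automatic because the common refinement is symmetric in the two ideals.
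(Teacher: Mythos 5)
Your ``cleanest route'' is, in fact, the paper's entire proof: by \cite[Tag 085Y]{stacks-project}, blowing up a scheme in an ideal $\cI$ and then blowing up the inverse image ideal of $\cJ$ yields canonically the blow up of the product ideal $\cI\cJ$; applied to $\cI_{D_1(\varphi)}$ and $\cI_{D_2(\varphi)}$ in both orders, this identifies the two towers in~\eqref{eq:blowups} with $\Bl_{\cI_{D_1(\varphi)}\cdot\,\cI_{D_2(\varphi)}}(B\times\P(V_5))$, and the diagram commutes because the identification is over $B\times\P(V_5)$. The point you under-appreciate is that this fact is completely general: it holds for arbitrary finite-type ideal sheaves on any scheme, with no smoothness, no transversality, and no determinantal local model. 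Consequently the ``main obstacle'' you anticipate (that $D_1(\varphi)$ is singular along $D_2(\varphi)$, so the smooth nested-centers commutation lemma fails) is a non-issue, and the local normal-form analysis on which you propose to spend ``the bulk of the effort'' is not needed for this lemma at all. It is essential here that the lemma's centers are the scheme-theoretic preimages (total transforms), which is exactly what makes the general statement apply verbatim; your universal-property formulation---both towers are terminal among schemes over $B\times\P(V_5)$ on which both ideals become invertible---is a correct and complete way to phrase it.

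Two assertions in your proposal are, however, wrong and should be deleted. First, the claim that in the second tower ``the relevant center is the proper transform'' of $D_1(\varphi)$: blowing up the inverse image ideal $\cI_{D_1(\varphi)}\cdot\cO$ on $\Bl_{D_2(\varphi)}(B\times\P(V_5))$ agrees with blowing up the ideal obtained by dividing out the maximal power of the invertible exceptional ideal---the paper's $D'_1(\varphi)$ defined via~\eqref{eq:strict}---and this subscheme in general only \emph{contains} the strict transform of $D_1(\varphi)$; identifying it is precisely the nontrivial content of Lemma~\ref{lemma:dprime-restricted} and Proposition~\ref{prop:dprime}, not something obtained for free from Cartier-ness of the exceptional divisor. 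Second, $D_1(\varphi)$ is not ``smooth of expected codimension $3$'': it has expected codimension $2$ and is singular exactly along $D_2(\varphi)$ (Proposition~\ref{proposition:d}), as you yourself concede later in the same write-up. Neither error is fatal, because your universal-property argument applied to the total-transform centers already proves the lemma on its own; but as written, your proposal makes the conclusion appear contingent on a local analysis that is both unnecessary and incorrectly set up.
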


\begin{proof}
By~\cite[Tag 085Y]{stacks-project}, both varieties in the top row are canonically isomorphic to the blow up of the product of the ideals of $D_1(\varphi)$ and $D_2(\varphi)$ in $B \times \P(V_5)$.
\end{proof}

The right side of the diagram can be further simplified: since $D_2(\varphi) \subset D_1(\varphi)$,   the ideal of $\pi_2^{-1}(D_1(\varphi))$ is contained in some power of the ideal $\cI_{E_2}$
of the exceptional divisor  {$E_2$} of $\pi_2$.\
Assume   
\begin{equation}\label{eq:ideal-assumption}
\cI_{\pi_2^{-1}(D_1(\varphi))} \not\subset \bigcap_{m=0}^\infty \cI_{E_2}^m 
\end{equation} 
and let $m$ be the maximal integer such that $\cI_{\pi_2^{-1}(D_1(\varphi))} \subset \cI_{E_2}^m$
 (when $D_2(\varphi)$ is integral, $m$ is the multiplicity of $D_1(\varphi)$ along~$D_2(\varphi)$).\
Since $\cI_{E_2}$ is an invertible ideal, we can write
\begin{equation}\label{eq:strict}
\cI_{\pi_2^{-1}(D_1(\varphi))} = 
\cI' \cdot \cI_{E_2}^m
\end{equation}
for some ideal $\cI' \not\subset \cI_{E_2}$.
We let $D_1'(\varphi)$ be the subscheme of $\Bl_{D_2(\varphi)}(B \times \P(V_5))$ defined by the ideal $\cI'$.\
 {If $D_1(\varphi)$ is integral, the subscheme $D'_1(\varphi)$ contains the strict transform of $D_1(\varphi)$ in $\Bl_{D_2(\varphi)}(B \times \P(V_5))$.} 

Invertible factors of an ideal do not affect the result of the blow up, hence 
\begin{equation}\label{eq:iso-blowups}
\Bl_{\pi_2^{-1}(D_1(\varphi))}(\Bl_{D_2(\varphi)}(B \times \P(V_5))) \cong \Bl_{D_1'(\varphi)}(\Bl_{D_2(\varphi)}(B \times \P(V_5))).
\end{equation}
The next lemma describes (under some conditions) the restriction of $D_1'(\varphi)$ to the exceptional divisor of the blow up $\pi_2$.

\begin{lemm}\label{lemma:dprime-restricted}
Assume $D_3(\varphi) = \vide$.\ 
Let $\cK$ and $\cC$ be the kernel and the cokernel sheaves of the map~$\varphi$ restricted to $D_2(\varphi)$, so that both are locally free of respective ranks $2$ and $3$.\ 
The exceptional divisor $E_2$ of the blow up $\pi_2$  naturally embeds into $ \P
(\cK^\vee \otimes \cC)$ 
and we have an inclusion of schemes
\begin{equation*}
D_1'(\varphi) \cap E_2  \subset
(\P(\cK^\vee) \times_{D_2(\varphi)} \P(\cC)) \cap E_2,
\end{equation*}
 {which is} an equality if the multiplicity of $D_1(\varphi)$ along $D_2(\varphi)$ equals $2$. 
\end{lemm}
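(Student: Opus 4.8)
The plan is to reduce the statement to a local determinantal computation and then globalize it. Since $D_3(\varphi)=\emptyset$, the morphism $\varphi$ has constant rank $1$ along $D_2(\varphi)$; this is the only place that hypothesis enters, and it guarantees that $\cK=\Ker(\varphi\vert_{D_2(\varphi)})$ and $\cC=\Coker(\varphi\vert_{D_2(\varphi)})$ are locally free of ranks $2$ and $3$. Near a point of $D_2(\varphi)$ I would pick local trivializations of $\cR$ and $\Omega_{\P(V_5)}(2)$ adapted to this rank-$1$ value, so that $\varphi$ takes the block form $\left(\begin{smallmatrix}1&0\\0&M\end{smallmatrix}\right)$ with $M$ a $3\times 2$ matrix of regular functions. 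The distinguished basis vectors span local complements to $\cK\subset\cR$ and to the image of $\varphi$ inside $\Omega_{\P(V_5)}(2)$, so that $M$ is exactly the composition $\cK\hookrightarrow\cR\xrightarrow{\ \varphi\ }\Omega_{\P(V_5)}(2)\twoheadrightarrow\cC$; in particular all six entries of $M$ vanish on $D_2(\varphi)$, and $M$ represents a homomorphism $\cK\to\cC$, i.e.\ a section of $\cK^\vee\otimes\cC$.

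The computation rests on two ideal identities that I would verify by expanding minors in this normal form. Expanding the $2\times 2$ minors of $\left(\begin{smallmatrix}1&0\\0&M\end{smallmatrix}\right)$ shows that $\cI_{D_2(\varphi)}$ is generated locally by the six entries of $M$, while expanding its $3\times 3$ minors shows that these coincide with the three $2\times 2$ minors of $M$, so that $\cI_{D_1(\varphi)}$ is generated locally by the latter. Globally the entries of $M$ assemble into a surjection $\cK\otimes\cC^\vee\twoheadrightarrow\cI_{D_2(\varphi)}/\cI_{D_2(\varphi)}^2$ onto the conormal sheaf of $D_2(\varphi)$ (the intrinsic derivative of $\varphi$ along $D_2(\varphi)$). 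Through the Rees algebra this surjection exhibits $\Bl_{D_2(\varphi)}(B\times\P(V_5))$ as a closed subscheme of $\P(\cK^\vee\otimes\cC)$ over $D_2(\varphi)$; restricting to the exceptional divisor yields the claimed embedding $E_2\hookrightarrow\P(\cK^\vee\otimes\cC)$, and I would record the tautological homomorphism $\widetilde M\in\cK^\vee\otimes\cC$ (defined up to scale by the tautological line) whose class at $b\in D_2(\varphi)$ is that of $M$. Finally, a homomorphism $\cK\to\cC$ has rank $\le 1$ exactly when it is a pure tensor, so the rank-$\le 1$ locus of $\widetilde M$ in $\P(\cK^\vee\otimes\cC)$ is the Segre image $\P(\cK^\vee)\times_{D_2(\varphi)}\P(\cC)$.

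It then remains to track the blow up of $D_1(\varphi)$. Each $2\times 2$ minor of $M$ is quadratic in the entries of $M$, and these entries generate $\cI_{E_2}$ after pulling back to $\Bl_{D_2(\varphi)}(B\times\P(V_5))$; hence the pullback of every such minor lies in $\cI_{E_2}^2\cdot\cJ$, where $\cJ$ is the ideal generated by the $2\times 2$ minors of $\widetilde M$, i.e.\ the ideal of $\P(\cK^\vee)\times_{D_2(\varphi)}\P(\cC)$. This gives $\cI_{\pi_2^{-1}(D_1(\varphi))}=\cI_{E_2}^2\cdot\cJ$. Comparing with the factorization $\cI_{\pi_2^{-1}(D_1(\varphi))}=\cI_{E_2}^m\cdot\cI'$ of~\eqref{eq:strict} and cancelling the invertible ideal $\cI_{E_2}^2$ gives $\cJ=\cI_{E_2}^{m-2}\cdot\cI'$. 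Since $m\ge 2$, this yields $\cJ\subseteq\cI'$, whence $D_1'(\varphi)\subseteq\P(\cK^\vee)\times_{D_2(\varphi)}\P(\cC)$ and in particular the asserted inclusion after intersecting with $E_2$; moreover $\cJ=\cI'$, i.e.\ equality holds, precisely when $m=2$.

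The main obstacle, and the step I would treat most carefully, is the globalization in the second paragraph: the local normal form and the generation of $\cI_{D_2(\varphi)}$ by the entries of $M$ must be patched into an honest surjection $\cK\otimes\cC^\vee\twoheadrightarrow\cI_{D_2(\varphi)}/\cI_{D_2(\varphi)}^2$ and a well-defined closed embedding $E_2\hookrightarrow\P(\cK^\vee\otimes\cC)$ carrying the tautological $\widetilde M$, all without assuming that $D_2(\varphi)$ is smooth or of expected codimension $6$ (in which borderline cases $E_2$ is a proper subscheme of $\P(\cK^\vee\otimes\cC)$). Everything else — the two minor expansions, the identification of the rank-$\le 1$ locus with the Segre product, and the final ideal cancellation — is formal once this framework is in place.
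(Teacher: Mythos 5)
Your proposal is correct and follows essentially the same route as the paper's own proof: the same local rank-one normal form for $\varphi$, the identification of $\cI_{D_2(\varphi)}$ with the entries and of $\cI_{D_1(\varphi)}$ with the $2\times 2$ minors of the residual $3\times 2$ block, the induced embedding $E_2 \hra \P(\cK^\vee\otimes\cC)$, and the observation that the pulled-back minors factor as the square of the exceptional equation times the Segre quadrics, which vanish on $D'_1(\varphi)$ and generate $\cI'$ exactly when the multiplicity is $2$. You are in fact more explicit than the paper on the globalization (the conormal surjection $\cK\otimes\cC^\vee \thra \cI_{D_2(\varphi)}/\cI_{D_2(\varphi)}^2$, noting that it is really $E_2$, not the whole blow up, that lands in $\P(\cK^\vee\otimes\cC)$) and on the comparison of the ideal factorization with~\eqref{eq:strict}, both of which the paper asserts tersely.
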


\begin{proof}
 Take a point $b \in D_2(\varphi)$.\  
Restricting to a sufficiently small neighborhood of $b$, we may assume that the bundles $\cR$ and $\Omega_{\P(V_5)}(2)$ are trivial and, 
choosing their trivializations appropriately, that the map $\varphi \colon \cR \to \Omega_{{\P(V_5)}}(2)$ is given by a matrix
\begin{equation*}
 {\left(
\begin{smallmatrix}
\varphi_{11} & \varphi_{12} & 0 \\
\varphi_{21} & \varphi_{22} & 0 \\
\varphi_{31} & \varphi_{32} & 0 \\
0 & 0 & 1
\end{smallmatrix}
\right),}
\end{equation*}
where the regular functions $\varphi_{ij} $ all vanish at $b$.\ 
In this neighborhood, the ideal of $D_2(\varphi)$ is generated by the 6 functions 
$\{\varphi_{ij}\}_{1 \le i \le 3,\, 1 \le j \le 2}$.\ 
 This shows that $E_2$ embeds into a $\P^5$-bundle over $D_2(\varphi)$ which can be   identified with $\P
 (\cK^\vee \otimes \cC)$.\ 
On the other hand, the ideal of $D_1(\varphi)$ is generated in this neighborhood by the functions
\begin{equation*}
 \varphi_{11}\varphi_{22} - \varphi_{12}\varphi_{21},\  
\varphi_{11}\varphi_{32} - \varphi_{12}\varphi_{31} ,\ 
\varphi_{21}\varphi_{32} - \varphi_{22}\varphi_{31}.
\end{equation*}
If 
 $\{u_{ij}\}_{1 \le i \le 3,\, 1 \le j \le 2}$ 
are the natural vertical homogeneous coordinates on the $\P^5$-bundle,
the preimages of these functions are  the products of
\begin{equation*}
 u_{11}u_{22} - u_{12}u_{21},\  
u_{11}u_{32} - u_{12}u_{31},\ 
u_{21}u_{32} - u_{22}u_{31}
\end{equation*}
with the square of an equation of the exceptional divisor.\ These quadratic polynomials  cut out
in $\P(\cK^\vee \otimes \cC)$ the fiber product $\P(\cK^\vee) \times_{D_2(\varphi)} \P(\cC)$ and they vanish by definition on $D_1'(\varphi)$
 {(and if the multiplicity equals 2, they generate the restriction of the ideal $\cI'$ to $E_2$).}
The lemma follows.
\end{proof}

\section{GM sixfolds}
\label{section:gm-6folds}

We now consider the family of quintic del Pezzo threefolds $q_X\colon \P(\rS_X) \to B$ constructed in Lemma~\ref{lemma:fibers-q}, 
where $B$ is a smooth 6-dimensional quadric, the map $\varphi$ defined in \eqref{eq:varphi}, and its degeneracy loci $D_k(\varphi)\subset B \times \P(V_5)$.
The intersections $\P({R}_b) \cap \Gr(2,V_5^\vee)$ are finite by Lemma~\ref{lemma:fibers-q}, hence $D_3(\varphi) = \emptyset$ by Lemma~\ref{lemma:d3}.
Let us describe $D_2(\varphi)$.

\begin{lemm}\label{lemma:d2}
Assume that~$\eqref{eq:assumptions}$ holds.\
There is a $\P^1$-fibration $D_2(\varphi) \to \Hilb^2_{Q'}(Y)$.\
In particular, $D_2(\varphi)$ is smooth of \textup{(}expected\textup{)} codimension $6$ in $ B \times \P(V_5)$.
\end{lemm}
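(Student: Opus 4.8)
The plan is to identify $D_2(\varphi)$ with a $\P^1$-bundle over the threefold $\Hilb^2_{Q'}(Y)$. Granting this, smoothness and the codimension count are immediate: by Lemma~\ref{lemma:s2q} the divisor $\Hilb^2_{Q'}(Y)\subset\Hilb^2(Y)$ is smooth of dimension $3$, so the $\P^1$-bundle has dimension $4$, which is the expected codimension of a corank-$\ge2$ locus of a morphism of bundles of ranks $3$ and $4$, namely $(3-1)(4-1)=6$, inside the $10$-dimensional $B\times\P(V_5)$. To set this up, I would first use that $D_3(\varphi)=\vide$ (Lemma~\ref{lemma:d3}), so that $\varphi$ has kernel of \emph{constant} rank $2$ along $D_2(\varphi)$; thus $\cK:=\Ker(\varphi\vert_{D_2(\varphi)})$ is a rank-$2$ subbundle of $\bw2V_5^\vee\otimes\cO$ with fiber $\cK_{(b,v)}={R}_b\cap\bw2(v^\perp)$ by~\eqref{eq:ker-varphi-bv}.

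For a point $(b,v)\in D_2(\varphi)$, write $L={R}_b\cap\bw2(v^\perp)$ and $\ell=\P(L)$: this is a line contained both in the plane $\P({R}_b)\subset Q'$ and in $\P(\bw2(v^\perp))$. The line $\ell$ is not contained in $\Gr(2,V_5^\vee)$, since otherwise $\ell\subset\Gr(2,V_5^\vee)\cap\P(W')\cap Q'=Y$ would be a line on $Y$, against~\eqref{eq:assumptions}. Hence $\xi:=\ell\cap\Gr(2,V_5^\vee)$ is the intersection of $\ell$ with the single Plücker quadric cutting out $\Gr(2,v^\perp)$ in $\P(\bw2(v^\perp))$, so it is a length-$2$ subscheme with $\langle\xi\rangle=\ell$; and since $\xi\subset\P({R}_b)\subset Q'$ and $\xi\subset\Gr(2,V_5^\vee)\cap\P(W')=M'$, we get $\xi\in\Hilb^2_{Q'}(Y)$. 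Performing this in families, the relative length-$2$ subscheme cut out by $\P(\cK)\cap\Gr(2,V_5^\vee)$ is flat of degree $2$ over $D_2(\varphi)$, which yields a morphism $D_2(\varphi)\to\Hilb^2_{Q'}(Y)$.

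To build the inverse, start from $\xi\in\Hilb^2_{Q'}(Y)$, with span $\ell=\langle\xi\rangle\subset Q'$ corresponding to a rank-$2$ isotropic $L\subset W'$, again with $\ell\not\subset\Gr(2,V_5^\vee)$. There is a \emph{unique} $v\in\P(V_5)$ with $L\subset\bw2(v^\perp)$: the condition is $\iota_v\vert_L=0$, cutting out a linear subspace of $\P(V_5)$, and were it positive-dimensional we would get $L\subset\bw2(v^\perp)\cap\bw2(v'^\perp)=\bw2(v^\perp\cap v'^\perp)$ for independent $v,v'$, forcing $\ell\subset\Gr(2,v^\perp\cap v'^\perp)\subset\Gr(2,V_5^\vee)$, which is excluded. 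This gives a morphism $\Hilb^2_{Q'}(Y)\to\P(V_5)$, $\xi\mapsto v(\xi)$. On the other hand, by the construction of~\eqref{diagram:qp} the assignment $b\mapsto\P({R}_b)$ identifies $B$ with the family of planes contained in $Q'$, and the planes through the fixed line $\ell$ are the isotropic lines of the rank-$3$ quadratic space $L^\perp/L$, i.e. a smooth conic $\cong\P^1$. Globalizing, I obtain a $\P^1$-bundle $\cB\to\Hilb^2_{Q'}(Y)$ parameterizing pairs $(\xi,b)$ with $\P({R}_b)\supset\langle\xi\rangle$, together with a morphism $\cB\to B\times\P(V_5)$, $(\xi,b)\mapsto(b,v(\xi))$.

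It then remains to check that the two constructions are mutually inverse: for $(\xi,b)\in\cB$ one has $L\subset{R}_b$ and $L\subset\bw2(v(\xi)^\perp)$, so ${R}_b\cap\bw2(v(\xi)^\perp)\supseteq L$ has corank $\ge2$ and the image lies in $D_2(\varphi)$, necessarily with kernel exactly $L$ since $D_3(\varphi)=\vide$; and the recovered $v$ agrees with $v(\xi)$ by uniqueness. This identifies $D_2(\varphi)$ with the $\P^1$-bundle $\cB$ over the smooth threefold $\Hilb^2_{Q'}(Y)$, proving the $\P^1$-fibration and hence the smoothness and codimension $6$. I expect the main obstacle to be not the point-set geometry but the scheme-theoretic bookkeeping: verifying that the universal length-$2$ subscheme is genuinely flat of degree $2$, that $\xi\mapsto v(\xi)$ and the conic bundle of planes through $\langle\xi\rangle$ assemble into honest morphisms and a $\P^1$-bundle, and that the no-lines hypothesis in~\eqref{eq:assumptions} is exactly what forces both that $\xi$ has length $2$ and that $v$ is unique.
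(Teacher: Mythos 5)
Your construction follows essentially the same route as the paper's proof: the same forward map $(b,v)\mapsto \xi_{b,v}=\P({R}_b\cap\bw2(v^\perp))\cap\Gr(2,V_5^\vee)$, of length $2$ because $Y$ contains no lines; the same $\P^1$ of planes $\P({R}_b)\subset Q'$ through the span $\langle\xi\rangle$ (the paper packages this as the fiber of the $\P^1$-bundle $\OFl(2,3;W')\to\OGr(2,W')$, you as the conic of isotropic lines in $L^\perp/L$ --- these are the same thing); and the same inverse built from the point $v(\xi)$. There is, however, one genuine gap: you assert that there is a \emph{unique} $v\in\P(V_5)$ with $L\subset\bw2(v^\perp)$, but your argument only proves there is \emph{at most one} (the locus $\{v:\iota_v\vert_L=0\}$ is a linear subspace of $\P(V_5)$, and it cannot be positive-dimensional). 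Existence is not formal: the linear map $V_5\to\Hom(L,V_5^\vee)$, $v\mapsto\iota_v\vert_L$, can perfectly well be injective, in which case the locus is empty. Concretely, if $e^i$ is a basis of $V_5^\vee$ and $L=\langle e^1\wedge e^2+e^3\wedge e^4,\ e^1\wedge e^3+e^4\wedge e^5\rangle$, the two contraction kernels are $\langle e_5\rangle$ and $\langle e_2\rangle$, which meet in $0$, so no $v$ exists. (Of course, this $L$ spans a line disjoint from $\Gr(2,V_5^\vee)$.) Without existence, your inverse morphism $\cB\to D_2(\varphi)$ is not even defined, so the identification of $D_2(\varphi)$ with the $\P^1$-bundle $\cB$ collapses.

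What makes existence true is precisely that $\ell=\langle\xi\rangle$ is a secant (or tangent) line of the Grassmannian: in the reduced case $\xi=\{[U_1],[U_2]\}$, one takes $0\ne v\in(U_1+U_2)^\perp$, which is nonzero since $\dim(U_1+U_2)\le 4$, and then $L=\bw2U_1+\bw2U_2\subset\bw2(v^\perp)$; the non-reduced case requires also killing the tangent direction, again at most $4$ linear conditions on $v$. The paper handles both cases uniformly with the evaluation map $\ev_\xi\colon V_5\to H^0(\xi,\cU^{\prime\vee}\vert_\xi)\cong\C^4$: its kernel is nonzero by dimension count (existence), and surjectivity of $\ev_\xi$ --- which fails only if $\langle\xi\rangle\subset\Gr(2,V_5^\vee)$, i.e., only if $Y$ contains a line --- makes the kernel exactly one-dimensional (uniqueness). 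So your uniqueness argument is correct and is equivalent to the paper's surjectivity step, but you must supplement it with this existence argument; once that is added, the rest of your proof (mutual inverseness via $D_3(\varphi)=\vide$, smoothness and the codimension count $(3-1)(4-1)=6$) goes through as in the paper.
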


\begin{proof}
Assume that the rank of $\varphi$ at   $(b,v)\in B \times \P(V_5)$ is  1.\ 
By~\eqref{eq:ker-varphi-bv}, we have 
\begin{equation*}
\dim({R}_b \cap \bw2(v^\perp)) = 2.
\end{equation*}
The  scheme $\xi_{b,v}:=\P({R}_b \cap \bw2(v^\perp)) \cap \Gr(2,v^\perp)$ is the intersection in $\P(  \bw2(v^\perp)) $ of a line and  a quadric, hence is either a line or a scheme of length~2.\ 
Since it is contained in~$Y$, the first case is impossible and we have a well-defined map 
\begin{equation}\label{eq:map-d2-hilb}
D_2(\varphi) \to \Hilb^2(Y),
\qquad 
(b,v) \mapsto \xi_{b,v}.
\end{equation}
The line $\langle \xi_{b,v}\rangle $ spanned by $\xi_{b,v}$ is   $\P({R}_b \cap \bw2(v^\perp))$; it is contained in $\P({R}_b)$,   hence in $Q'$ by~\eqref{diagram:qp}.\
Thus the map~{\eqref{eq:map-d2-hilb}} factors through $\Hilb^2_{Q'}(Y)$.\
Furthermore, it lifts to a map 
\begin{equation*}
{\delta}\colon D_2(\varphi) \to \Hilb^2_{Q'}(Y) \times_{\OGr(2,W')} \OFl(2,3;W')
\qquad 
(b,v) \mapsto (\xi_{b,v}, {R}_b ).
\end{equation*}
Since $\OFl(2,3;W')$ is a $\P^1$-bundle over $\OGr(2,W')$, 
the lemma will follow if we show that ${\delta}$ is an isomorphism.\ We construct an inverse.

Let   $\xi$ be a point of $\Hilb^2_{Q'}(Y)$ and  let $\P({R})  \subset Q'$ be a plane containing {the line $\langle \xi \rangle \subset Q'$}.\ 
If~$\cU'$ is the tautological   bundle on $\Gr(2,V_5^\vee)$, the evaluation map 
\begin{equation*}
V_5 = H^0(\Gr(2,V_5^\vee),{\cU}^{\prime\vee}) \xrightarrow{\ \ev_\xi\ } H^0(\xi, \cU^{\prime\vee}\vert_\xi) \cong \C^4
\end{equation*}
is surjective: if not, the line $\langle \xi \rangle $ is contained in $\Gr(2,V_5^\vee)$,  and in $Q'$ by definition of $\Hilb^2_{Q'}(Y)$,  hence it is contained in $Y$,     contradicting~\eqref{eq:assumptions}.\ 
 {If $v(\xi) \in V_5$ is a   generator of the kernel of~$\ev_\xi$, we} have $\xi \subset \Gr(2,v(\xi)^\perp) \subset \Gr(2,V_5^\vee)$.\ 
On the other hand, since   $B = \OGr(3,W')$ and $\cR \subset W' \otimes \cO_B$ is the tautological subbundle, there is a unique point $b({R}) \in B$ such that ${R} = {R}_{b({R})} $.\ 
The intersection $\P({R}_{b({R})} \cap \bw2(v(\xi)^\perp))$ contains the line $\langle \xi \rangle $, hence ${R}_{b({R})} \cap \bw2(v(\xi)^\perp)$ is at least 2-dimensional.\ 
Thus, the association   $(\xi,{R}) \mapsto (b({R}),v(\xi))  \in B\times \P(V_5) $ defines a map
$
\Hilb^2_{Q'}(Y) \times_{\OGr(2,W')} \OFl(2,3;W') \to D_2(\varphi) 
$
which is inverse to ${\delta}$.
 \end{proof}

The next step is a description of $D_1(\varphi)$ and $D_1'(\varphi)$.\ 
Recall that by definition, we have $\P(\rS_Y) \subset \P(\rS_{Q'}) = \P_B(\cR)$.

\begin{prop}\label{proposition:d}
There is 
a proper birational map 
\begin{equation*}
\rho \colon \Bl_{\P(\rS_Y)}(\P_B(\cR)) \to D_1(\varphi)
\end{equation*}
which is an isomorphism over the complement of $D_2(\varphi)$ and   a $\P^1$-fibration over $D_2(\varphi)$.\
In particular, $D_1(\varphi)$ is irreducible of \textup{(}expected\textup{)} codimension $2$ in $ B \times \P(V_5)$, smooth outside of~$D_2(\varphi)$.
\end{prop}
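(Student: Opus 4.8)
The plan is to construct the map $\rho$ explicitly by relating the degeneracy locus $D_1(\varphi)$ to the kernel of $\varphi$, and then to analyze its fibers. Recall from~\eqref{eq:ker-varphi-bv} that for $(b,v) \in D_1(\varphi)$ we have $\Ker(\varphi_{b,v}) = R_b \cap \bw2(v^\perp) \ne 0$, and that since $D_3(\varphi) = \emptyset$ (Lemma~\ref{lemma:d3}), this kernel has dimension~$1$ over $D_1(\varphi) \setminus D_2(\varphi)$ and dimension~$2$ over $D_2(\varphi)$. A nonzero element $\omega \in R_b \cap \bw2(v^\perp)$ determines a point of $\P(\cR) = \P_B(\cR)$ lying over~$b$, and the condition $\omega \in \bw2(v^\perp)$ together with $v$ being (generically) recoverable from $\omega$ suggests that the natural source of $\rho$ is $\P_B(\cR)$, blown up along the locus where the kernel jumps.

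First I would make precise the rational map $\P_B(\cR) \dra D_1(\varphi)$. A point of $\P_B(\cR)$ is a pair $(b, [\omega])$ with $\omega \in R_b$; viewing $\omega \in \bw2V_5^\vee$ as an alternating form on $V_5$, its kernel $\Ker(\omega) \subset V_5$ is generically one-dimensional (when $\omega$ has rank~4), giving a point $v \in \P(V_5)$ with $\omega \in \bw2(v^\perp)$, hence a point $(b,v) \in D_1(\varphi)$. The indeterminacy occurs exactly where $\omega$ has rank~$2$, i.e. is decomposable: but $\omega \in R_b \subset W'$ decomposable means $[\omega] \in \Gr(2,V_5^\vee) \cap \P(R_b) \subset Y$, which by Lemma~\ref{lemma:fibers-q} is the image of the finite map $q_Y$, and this locus is precisely $\P(\rS_Y) \subset \P_B(\cR)$. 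This identifies the indeterminacy locus with $\P(\rS_Y)$ and motivates resolving it by the blow up $\Bl_{\P(\rS_Y)}(\P_B(\cR))$.

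Next I would verify that blowing up resolves the indeterminacy and that the resulting morphism $\rho$ has the stated properties. Over $\P_B(\cR) \setminus \P(\rS_Y)$ the map $(b,[\omega]) \mapsto (b, \Ker(\omega))$ is a morphism, and I would check it is an isomorphism onto $D_1(\varphi) \setminus D_2(\varphi)$ by producing the inverse $(b,v) \mapsto (b, [\text{generator of } R_b \cap \bw2(v^\perp)])$, which is well-defined since the kernel is one-dimensional away from $D_2(\varphi)$. Over $D_2(\varphi)$, where $\dim \Ker(\varphi_{b,v}) = 2$, the fiber of $\rho$ should be the line $\P(\Ker(\varphi_{b,v})) = \P(R_b \cap \bw2(v^\perp)) \cong \P^1$; I expect this $\P^1$ to be identified with the fiber of the exceptional divisor of the blow up, which is why $\rho$ is a $\P^1$-fibration there. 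The blow up center $\P(\rS_Y)$ is smooth (it is a $\P^3$-bundle over the smooth surface $Y$), so $\Bl_{\P(\rS_Y)}(\P_B(\cR))$ is smooth and irreducible; since $\rho$ is proper, birational, and surjective, $D_1(\varphi)$ is irreducible, and over the locus $D_1(\varphi) \setminus D_2(\varphi)$ where $\rho$ is an isomorphism it is smooth. The codimension statement follows from the expected dimension count, consistent with Proposition~\ref{proposition:quintic-fibration}.

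The main obstacle I anticipate is making the scheme-theoretic (as opposed to set-theoretic) identification rigorous: one must show that $\rho$ genuinely factors through the blow up with the correct scheme structure, i.e. that the pullback of the ideal of $\P(\rS_Y)$ becomes invertible under the kernel construction, and that $\rho$ restricts to an \emph{isomorphism} (not merely a bijection) away from $D_2(\varphi)$ and to a $\P^1$-bundle over the smooth $D_2(\varphi)$. The cleanest route is likely to compute everything in the local coordinates already set up in the proof of Lemma~\ref{lemma:dprime-restricted}: there the matrix normal form of $\varphi$ exhibits $D_1(\varphi)$ locally as cut out by the three $2\times 2$ minors of a $3\times 2$ matrix of functions, whose degeneracy behavior along $D_2(\varphi)$ (the vanishing of all entries) is exactly resolved by blowing up. I would match the exceptional $\P^1$ of $\Bl_{\P(\rS_Y)}$ with $\P(\Ker \varphi_{b,v})$ in these coordinates to conclude.
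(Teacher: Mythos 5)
Your starting point is the same correspondence the paper uses --- the kernel map $(b,[\omega]) \mapsto (b,\Ker\omega)$, undefined exactly along $\P(\rS_Y)$ --- but your picture of how the blow up interacts with $D_2(\varphi)$ is wrong, and the argument as planned would break there. You expect the $\P^1$-fibers of $\rho$ over $D_2(\varphi)$ to be ``fibers of the exceptional divisor of the blow up.'' They are not. The center $\P(\rS_Y)$ is a $\P^3$-bundle over $Y$, hence $5$-dimensional and of codimension $3$ in the $8$-dimensional $\P_B(\cR)$, so the exceptional divisor $E$ is a $\P^2$-bundle over $\P(\rS_Y)$, of dimension $7$, while $\rho^{-1}(D_2(\varphi))$ is a $\P^1$-fibration over the $4$-dimensional $D_2(\varphi)$, of dimension $5$; a dimension count alone rules out your identification. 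In fact $E$ is not contracted by $\rho$ at all: the exceptional $\P^2$ over a point $(b,[\omega])$ with $\omega$ decomposable is $\P(\Ker\omega)$, and $\rho$ maps it isomorphically onto the plane $\{b\}\times\P(\Ker\omega) \subset D_1(\varphi)$, so $E$ maps birationally onto a divisor of $D_1(\varphi)$. The true source of the $\P^1$-fibers is different: for $(b,v)\in D_2(\varphi)$, every rank-$4$ form $\eta$ in the $2$-dimensional space $R_b\cap\bw2(v^\perp)$ satisfies $\Ker\eta = \langle v\rangle$, so the kernel map, already on $\P_B(\cR)\setminus\P(\rS_Y)$, contracts the line $\P(R_b\cap\bw2(v^\perp))$ minus its length-$2$ intersection $\xi_{b,v}$ with $\P(\rS_Y)$ to the single point $(b,v)$; the fiber of $\rho$ is the strict transform of this line. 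For the same reason, your intermediate claim that $\P_B(\cR)\setminus\P(\rS_Y)\to D_1(\varphi)\setminus D_2(\varphi)$ is an isomorphism is false as stated: that open set also dominates $D_2(\varphi)$, and over $D_2(\varphi)$ it is far from injective.

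Second, the step that actually carries the proof is only announced, and the tool you propose for it lives on the wrong space. What must be shown is that blowing up the reduced subscheme $\P(\rS_Y)$ resolves the kernel map scheme-theoretically and that the resolved morphism onto $D_1(\varphi)$ has fibers $\P(\Ker\varphi_{b,v})$; the local coordinates of Lemma~\ref{lemma:dprime-restricted} are coordinates on $B\times\P(V_5)$ near $D_2(\varphi)$, not on $\P_B(\cR)$, so they do not give this directly. The paper supplies exactly this step by global base change, and I would suggest you rebuild your argument on it: since $p_{Q'}$ is flat and $\P(\rS_Y) = p_{Q'}^{-1}(Y)$, one has $\Bl_{\P(\rS_Y)}(\P_B(\cR)) = \P_B(\cR)\times_{Q'}\Bl_Y(Q')$; since $Y$ is the dimensionally transverse intersection $Q'\cap\Gr(2,V_5^\vee)$, one has $\Bl_Y(Q') = Q'\times_{\P(\sbw2V_5^\vee)}\Bl_{\Gr(2,V_5^\vee)}(\P(\bw2V_5^\vee))$; and the classical isomorphism $\Bl_{\Gr(2,V_5^\vee)}(\P(\bw2V_5^\vee))\cong\P(\Omega_{\P(V_5)}^2(2))$ (your map $\omega\mapsto[\omega\wedge\omega]$, whose base scheme is cut out by the Pl\"ucker quadrics) then identifies $\Bl_{\P(\rS_Y)}(\P_B(\cR))$ with the incidence variety of triples $(b,v,\eta)$ with $\eta\in\P(R_b\cap\bw2(v^\perp)) = \P(\Ker\varphi_{b,v})$. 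After this identification, $\rho$ is the projection $(b,v,\eta)\mapsto(b,v)$, and all assertions of the proposition --- surjectivity onto $D_1(\varphi)$, isomorphism over $D_1(\varphi)\setminus D_2(\varphi)$, the $\P^1$-fibration $\P(\cK)$ over $D_2(\varphi)$, and the irreducibility and generic smoothness of $D_1(\varphi)$ --- are read off from the fibers $\P(\Ker\varphi_{b,v})$, with the scheme-theoretic issues you rightly worried about absorbed into flatness and transversality.
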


\begin{proof}
Since $\P(\rS_Y)=p_{Q'}^{-1}(Y)$, we have
\begin{equation*}
\Bl_{\P(\rS_Y)}(\P_B(\cR)) =  \P_B(\cR) \times_{Q'} \Bl_Y(Q').
\end{equation*}
Moreover, since $Y$ is the transversal intersection of $Q'$ and $\Gr(2,V_5^\vee)$ in $\P(\bw2V_5^\vee)$, we have $\Bl_Y(Q') = Q' \times_{\P(\sbw2V_5^\vee)} \Bl_{\Gr(2,V_5^\vee)}(\P(\bw2V_5^\vee))$.
Finally,  one checks that
\begin{equation*}
\Bl_{\Gr(2,V_5^\vee)}(\P(\bw2V_5^\vee)) \cong \P(\Omega_{ \P(V_5)}^2(2)),
\end{equation*}
where the blow up map is induced by the embedding  $\Omega_{\P(V_5)}^2(2) \hookrightarrow \bw2V_5^\vee \otimes \cO_{\P(V_5)}$.\
All this implies
\begin{equation*}
\Bl_{\P(\rS_Y)}(\P_B(\cR)) \cong \P_B(\cR) \times_{\P(\sbw2V_5^\vee)} \P(\Omega_{\P(V_5)}^2(2)),
\end{equation*}
hence $\Bl_{\P(\rS_Y)}(\P_B(\cR))$ parameterizes triples $(b,v,\eta)  \in B \times \P(V_5) \times \P(\bw2V_5^\vee) $ such that \mbox{$\eta \in \P({R}_b \cap \bw2(v^\perp)) = \P(\Ker\varphi_{b,v})$}.
This means that the map 
\begin{equation*}
\Bl_{\P(\rS_Y)}(\P_B(\cR)) \to B \times \P(V_5),
\qquad
(b,v,\eta) \mapsto (b,v)
\end{equation*}
factors through a surjective map onto $D_1(\varphi)$.\ It  is an isomorphism over $D_1(\varphi) \setminus D_2(\varphi)$ and
the scheme-theoretic preimage of $D_2(\varphi)$ is isomorphic to the projectivization of the rank-2 vector bundle $\cK := \Ker(\varphi\vert_{D_2(\varphi)})$.\ 
This proves the proposition.
\end{proof}

{We now analyze the right  side of the diagram~\eqref{eq:blowups}.\
In order to use the isomorphism~\eqref{eq:iso-blowups}  for the blow up $\pi'_1$, we need a description of the subscheme $D'_1(\varphi) \subset \Bl_{D_2(\varphi)}(B \times \P(V_5))$  defined after~\eqref{eq:strict} (the assumption~\eqref{eq:ideal-assumption} holds in our case since the scheme $\Bl_{D_2(\varphi)}(B \times \P(V_5))$ is  integral  and $D_1(\varphi)$ is non-empty).} This description is provided by the next proposition.

\begin{prop}\label{prop:dprime}
The subscheme $D'_1(\varphi) \subset \Bl_{D_2(\varphi)}(B \times \P(V_5))$ is isomorphic to the blow up of $\Bl_{\P(\rS_Y)}(\P_B(\cR))$ along $\rho^{-1}(D_2(\varphi))$.\ 
It is in particular smooth and irreducible.
\end{prop}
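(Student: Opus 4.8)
The plan is to first identify $D'_1(\varphi)$ with the strict transform of $D_1(\varphi)$ under $\pi_2$, then to realize this strict transform as a blow up of $W:=\Bl_{\P(\rS_Y)}(\P_B(\cR))$ by means of the universal property of blow ups, and finally to check that the resulting morphism is an isomorphism by reducing to a transverse determinantal local model. First I would pin down the multiplicity $m$ in~\eqref{eq:strict}: in the local coordinates used in the proof of Lemma~\ref{lemma:dprime-restricted}, the three generators of $\cI_{D_1(\varphi)}$ pull back to the products of the square of a local equation of $E_2$ with the $2\times2$ minors of the matrix $(u_{ij})$, and these minors are not divisible by the equation of $E_2$. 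Hence $m=2$ and $\cI'$ is precisely the $\cI_{E_2}$-saturation of $\pi_2^*\cI_{D_1(\varphi)}$, so that $D'_1(\varphi)$ is the strict transform of $D_1(\varphi)$ under $\pi_2$. Since $D_2(\varphi)\subset D_1(\varphi)$ scheme-theoretically (the minors lie in $\cI_{D_2(\varphi)}^2$), the standard identification of a strict transform with a blow up (see, e.g., \cite{stacks-project}) gives $D'_1(\varphi)\isom \Bl_{D_2(\varphi)}(D_1(\varphi))$, where $D_2(\varphi)$ carries the ideal $\cI_{D_2(\varphi)}\cdot\cO_{D_1(\varphi)}$.

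Next, set $Z:=\rho^{-1}(D_2(\varphi))$, the scheme-theoretic preimage, which by Proposition~\ref{proposition:d} is the $\P^1$-bundle $\P(\cK)$ over $D_2(\varphi)$. By the very definition of scheme-theoretic preimage, the pullback along $\rho$ of the ideal $\cI_{D_2(\varphi)}\cdot\cO_{D_1(\varphi)}$ equals $\cI_Z$, and $\cI_Z$ becomes invertible on $\Bl_Z(W)$. Thus the composite $\Bl_Z(W)\to W\xrightarrow{\ \rho\ }D_1(\varphi)$ turns $\cI_{D_2(\varphi)}\cdot\cO_{D_1(\varphi)}$ into an invertible ideal, and the universal property of the blow up $\Bl_{D_2(\varphi)}(D_1(\varphi))=D'_1(\varphi)$ produces a unique morphism $g\colon \Bl_Z(W)\to D'_1(\varphi)$ over $D_1(\varphi)$. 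Because $\rho$ is an isomorphism over $D_1(\varphi)\setminus D_2(\varphi)$ and both centers $Z$ and $D_2(\varphi)$ are supported over $D_2(\varphi)$, the morphism $g$ is proper and birational, and an isomorphism over $D_1(\varphi)\setminus D_2(\varphi)$.

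The remaining point — that $g$ is an isomorphism over $D_2(\varphi)$ — is the crux, and I would handle it by reduction to an explicit local model. By Lemma~\ref{lemma:d2}, $D_2(\varphi)$ is smooth of the expected codimension $6$, and by Lemma~\ref{lemma:dprime-restricted} its ideal is generated by the regular sequence $\{\varphi_{ij}\}_{1\le i\le 3,\,1\le j\le 2}$; these functions therefore form part of a system of coordinates transverse to $D_2(\varphi)$. Trivializing $\cR$, $\Omega_{\P(V_5)}(2)$, $\cK$ and $\cC$ étale-locally, the pair $(B\times\P(V_5),\,D_1(\varphi))$ becomes the product of $D_2(\varphi)$ with the affine determinantal model $(\A^6,\,\{\mathrm{rank}\le 1\})$ of $3\times2$ matrices $M$, with $D_2(\varphi)$ mapping to $D_2(\varphi)\times\{0\}$. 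Under this identification $W$, $Z$ and $D'_1(\varphi)$ become the products of $D_2(\varphi)$ with, respectively, the kernel resolution $\{(M,[a]):Ma=0\}$, its central fibre $\{0\}\times\P^1$, and the strict transform of $\{\mathrm{rank}\le 1\}$ in $\Bl_0(\A^6)$. In this model one checks directly, in the standard affine charts of $\Bl_0(\A^6)$ and of the kernel resolution, that the strict transform is smooth and that $g$ restricts to a bijection, hence an isomorphism; equivalently, this shows $g$ is quasi-finite over $D_2(\varphi)$, after which Zariski's Main Theorem applies since the local model makes $D'_1(\varphi)$ normal. I expect exactly this chart verification in the determinantal model to be the only real obstacle.

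Finally, with $g$ an isomorphism, the smoothness and irreducibility of $D'_1(\varphi)$ follow formally: $W=\Bl_{\P(\rS_Y)}(\P_B(\cR))$ is smooth and irreducible, and $Z=\P(\cK)$ is smooth (a $\P^1$-bundle over the smooth $D_2(\varphi)$), so $\Bl_Z(W)\cong D'_1(\varphi)$ is smooth and irreducible, as claimed.
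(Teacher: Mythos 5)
Your strategy is sound and it is a genuinely different route from the paper's. Both proofs end up producing the same comparison morphism $g\colon \Bl_{\rho^{-1}(D_2(\varphi))}\bigl(\Bl_{\P(\rS_Y)}(\P_B(\cR))\bigr)\to D'_1(\varphi)$, but via different universal properties: the paper lifts $\rho$ to the ambient blow up $\Bl_{D_2(\varphi)}(B\times\P(V_5))$ and observes that the image lies in the strict transform of $D_1(\varphi)$, hence \emph{a fortiori} in $D'_1(\varphi)$ --- only the \emph{containment} of the strict transform in $D'_1(\varphi)$ is used --- whereas you invoke the universal property of $D'_1(\varphi)\cong\Bl_{D_2(\varphi)\cap D_1(\varphi)}(D_1(\varphi))$, which requires the \emph{equality} of $D'_1(\varphi)$ with the strict transform. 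The endgames also differ: the paper proves smoothness of $D'_1(\varphi)$ by combining Lemma~\ref{lemma:dprime-restricted} with a codimension estimate along the Cartier divisor $E_2$, and then concludes that $g$ is an isomorphism because a proper morphism of smooth projective varieties that is an isomorphism off irreducible divisors on both sides induces an isomorphism on Picard groups; you instead reduce \'etale-locally to the $3\times 2$ determinantal model, where $D_1$ is the cone over the Segre variety $\P^1\times\P^2\subset\P^5$, $W$ is the total space of $\cO_{\P^1}(-1)^{\oplus 3}$, and both $\Bl_Z(W)$ and the strict transform of $D_1$ in $\Bl_0(\A^6)$ are identified with the total space of $\cO_{\P^1\times\P^2}(-1,-1)$, after which Zariski's Main Theorem finishes. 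The reduction itself is legitimate: since $D_2(\varphi)$ is smooth of codimension $6$ (Lemma~\ref{lemma:d2}), the six entries $\varphi_{ij}$ of the normal form in Lemma~\ref{lemma:dprime-restricted} define a smooth morphism to $\A^6$ near $D_2(\varphi)$, and blow ups and strict transforms commute with smooth base change. Your route is more computational but yields the sharper statement $D'_1(\varphi)=\Bl_{D_2(\varphi)\cap D_1(\varphi)}(D_1(\varphi))$; the paper's route avoids charts (beyond Lemma~\ref{lemma:dprime-restricted}) and avoids the saturation issue discussed next.

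The one step that is genuinely incomplete as written is your first one: ``the minors are not divisible by the equation of $E_2$, hence $m=2$ and $\cI'$ is precisely the $\cI_{E_2}$-saturation of $\pi_2^{-1}\cI_{D_1(\varphi)}$.'' Non-divisibility gives exactly the maximality of $m=2$, i.e.\ $\cI'\not\subset\cI_{E_2}$, but it does \emph{not} imply that $\cI'$ is saturated: for instance, $\cI=(x^2,xy)=(x)\cdot(x,y)$ in $\A^2$ with $E=\{x=0\}$ has factor $(x,y)\not\subset(x)$, yet its $x$-saturation is the unit ideal. What you must check is that a local equation $e$ of $E_2$ is a nonzerodivisor modulo the ideal generated by the three minors of $(u_{ij})$, equivalently that this ideal has no associated primes supported on $E_2$. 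This is true, and it follows from the chart computation you already plan to carry out: in the chart $u_{11}=1$ one has $u_{21}u_{32}-u_{22}u_{31}=u_{21}(u_{32}-u_{12}u_{31})-u_{31}(u_{22}-u_{12}u_{21})$, so the ideal of minors equals $(u_{22}-u_{12}u_{21},\,u_{32}-u_{12}u_{31})$, which defines an integral graph on which $e$ remains a free coordinate; the other charts are symmetric. Since your entire construction of $g$ rests on $D'_1(\varphi)$ being the strict transform (not merely containing it), this verification must be made explicit; once it is, the rest of your argument goes through.
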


\begin{proof}
{Consider the composition
\begin{equation*}
\Bl_{\rho^{-1}(D_2(\varphi))}(\Bl_{\P(\rS_Y)}(\P_B(\cR))) \to \Bl_{\P(\rS_Y)}(\P_B(\cR)) \xrightarrow{\ \rho\ } D_1(\varphi) \hra B \times \P(V_5).
\end{equation*}
The preimage of $D_2(\varphi) \subset B \times \P(V_5)$ is a Cartier divisor, hence the composition lifts to a map 
\begin{equation}\label{eq:map-dprime}
\Bl_{\rho^{-1}(D_2(\varphi))}(\Bl_{\P(\rS_Y)}(\P_B(\cR))) \to \Bl_{D_2(\varphi)}(B \times \P(V_5)).
\end{equation}
By Proposition~\ref{proposition:d}, $\rho$ is an isomorphism over $D_1(\varphi) \setminus D_2(\varphi)$, hence 
 {the image of~\eqref{eq:map-dprime} is contained in the strict transform of $D_1(\varphi)$, hence a fortiori in $D'_1(\varphi)$.
So,}
the map~\eqref{eq:map-dprime} factors through a map
\begin{equation*}
\Bl_{\rho^{-1}(D_2(\varphi))}(\Bl_{\P(\rS_Y)}(\P_B(\cR))) \to D'_1(\varphi)
\end{equation*}
which is an isomorphism over the complement of the exceptional divisor $E_2$ of  {the blow up} $\Bl_{D_2(\varphi)}(B \times \P(V_5))$.\ 
We now prove that this map is an isomorphism.

By Lemma \ref{lemma:d2} and Proposition \ref{proposition:d}, its source is a smooth variety.\ 
Let us check that its target is smooth as well.\
By Proposition \ref{proposition:d}, $D'_1(\varphi)$ is smooth {of codimension~2} over the complement of~$D_2(\varphi)$.\
On the other hand, its intersection with 
$E_2$ 
  {is by Lemma~\ref{lemma:dprime-restricted} contained in} a $(\P^1 \times \P^2)$-fibration over $D_2(\varphi)$
(note that  {in our case,} $E_2$ equals   the $\P^5$-bundle over~$D_2(\varphi)$ since $D_2(\varphi) \subset B \times \P(V_5)$ is smooth of codimension~6 by Lemma~\ref{lemma:d2}).\
In particular, {the codimension of $D'_1(\varphi) \cap E_2$ in $E_2$ is greater than or equal to 2.\
But $E_2$ is a Cartier divisor, hence the codimension of $D'_1(\varphi) \cap E_2$ in $E_2$ does not exceed the codimension of $D'_1(\varphi)$ in $\Bl_{D_2(\varphi)}(B \times \P(V_5))$.\
Altogether, this shows that the codimension of $D'_1(\varphi) \cap E_2$ in $E_2$ is 2 and that
$D'_1(\varphi) \cap E_2$ equals the $(\P^1 \times \P^2)$-fibration over $D_2(\varphi)$.\
In particular,  it is smooth.\
Since~$E_2$ is a Cartier divisor, it follows that $D'_1(\varphi)$ is smooth along $E_2$.\
Since it is smooth  outside of~$E_2$, it is  smooth everywhere.}
 
Finally,   the preimage of $D_2(\varphi)$ in $\Bl_{\rho^{-1}(D_2(\varphi))}(\Bl_{\P(\rS_Y)}(\P_B(\cR)))$
is by Proposition~\ref{proposition:d}   an irreducible divisor.\
 It remains to note that a proper morphism between two smooth projective varieties  which is an isomorphism
between the complements of two irreducible divisors induces an isomorphism on the Picard groups,   hence is an isomorphism.}
\end{proof}

We can now prove our main result.
 {We denote by $\cU_X$ the pullback to $X$ of the tautological rank-2 bundle on $\Gr(2,V_5)$.}

\begin{theo}\label{theorem:main}
Assume that~$\eqref{eq:assumptions}$ holds and let $X$ be a GM sixfold which is generalized dual to a GM surface~$Y$, as described in Section~{\textup{\ref{subsection:dual-gm}}}.\
There is a diagram
\begin{equation*}
\xymatrix@C=-1pt@R=3ex{
&& \Bl_{D'_1(\varphi)}(\Bl_{D_2(\varphi)}(B \times \P(V_5))) \ar[dl] \ar[dr] \\
& \P(\rS_X) \times_X \P_X(\cU_X) \ar[dl] && \Bl_{D_2(\varphi)}(B \times \P(V_5)) \ar[dr] \\
X &&&& B \times \P(V_5),
}
\end{equation*}
where the leftmost map is a fibration with fiber $\P^3 \times \P^1$  and the  top left arrow is 
 the blow up with center  a $\P^2$-fibration over $D_2(\varphi)$.

Furthermore, the integral cohomology of $X$ embeds into the sum of Tate twists of $\Z$, $H^\bullet(Y,\Z)$, and $H^\bullet(\Hilb^2_{Q'}(Y),\Z)$.\
In particular, it is torsion-free.
\end{theo}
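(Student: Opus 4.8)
The plan is to deduce both assertions from the diagram, which is assembled from the results of the previous sections, and then to feed the diagram into the standard blow-up and projective-bundle decompositions of integral cohomology. To build the diagram, first note that the codimension hypotheses of Proposition~\ref{proposition:quintic-fibration} hold: indeed $\codim D_1(\varphi)=2$ by Proposition~\ref{proposition:d} and $\codim D_2(\varphi)=6$ by Lemma~\ref{lemma:d2}, while $D_3(\varphi)=\vide$ by Lemma~\ref{lemma:d3}. That proposition then identifies $\P_{\!\cM}(\cU)$ with $\Bl_{D_1(\varphi)}(B\times\P(V_5))$ and shows that the preimage of $D_2(\varphi)$ under this blow up is a $\P^2$-fibration; moreover $\P_{\!\cM}(\cU)=\P(\rS_X)\times_X\P_X(\cU_X)$ since $\cU$ is the pullback of $\cU_X$ along $p_X$. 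Blowing up this $\P^2$-fibration and invoking Lemma~\ref{lemma:blow ups} together with the isomorphism~\eqref{eq:iso-blowups} identifies the result with $\Bl_{D'_1(\varphi)}(\Bl_{D_2(\varphi)}(B\times\P(V_5)))$, the top vertex of the diagram. As $\P(\rS_X)\to X$ and $\P_X(\cU_X)\to X$ are projective bundles of relative dimensions~$3$ and~$1$, the leftmost map is a $(\P^3\times\P^1)$-fibration. Finally, the centers $D_2(\varphi)$ and $D'_1(\varphi)$ of the two blow ups on the right are smooth by Lemma~\ref{lemma:d2} and Proposition~\ref{prop:dprime}.

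Write $T$ for the top vertex. Since the leftmost map is a Zariski-locally trivial $(\P^3\times\P^1)$-fibration, Leray--Hirsch makes the pullback $H^\bullet(X,\Z)\hookrightarrow H^\bullet(\P_{\!\cM}(\cU),\Z)$ a split injection; since the top-left map is the blow up of a smooth center, the blow-up formula makes $H^\bullet(\P_{\!\cM}(\cU),\Z)\hookrightarrow H^\bullet(T,\Z)$ a split injection as well. Composing these, $H^\bullet(X,\Z)$ sits as a direct summand of $H^\bullet(T,\Z)$.

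It remains to decompose $H^\bullet(T,\Z)$ along the right-hand side of the diagram. Because the two blow ups have smooth centers $D_2(\varphi)$ and $D'_1(\varphi)$, the blow-up formula presents $H^\bullet(T,\Z)$ as the sum of $H^\bullet(B\times\P(V_5),\Z)$ and Tate twists of $H^\bullet(D_2(\varphi),\Z)$ and $H^\bullet(D'_1(\varphi),\Z)$. Here $H^\bullet(B\times\P(V_5),\Z)$ is a sum of Tate twists of $\Z$ by the K\"unneth formula, $B$ being a six-dimensional quadric and $\P(V_5)=\P^4$. The center $D_2(\varphi)$ is a $\P^1$-fibration over $\Hilb^2_{Q'}(Y)$ by Lemma~\ref{lemma:d2}, so its cohomology is a sum of Tate twists of $H^\bullet(\Hilb^2_{Q'}(Y),\Z)$. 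Finally, by Proposition~\ref{prop:dprime} the center $D'_1(\varphi)$ is an iterated blow up of the $\P^2$-bundle $\P_B(\cR)$ over $B$ along the $\P^3$-bundle $\P(\rS_Y)$ over $Y$ and along $\rho^{-1}(D_2(\varphi))=\P_{D_2(\varphi)}(\cK)$, a $\P^1$-bundle over $D_2(\varphi)$; applying the projective-bundle and blow-up formulas once more shows that $H^\bullet(D'_1(\varphi),\Z)$ is a sum of Tate twists of $\Z$, of $H^\bullet(Y,\Z)$, and of $H^\bullet(\Hilb^2_{Q'}(Y),\Z)$. Assembling all contributions exhibits $H^\bullet(T,\Z)$, and hence its summand $H^\bullet(X,\Z)$, inside a sum of Tate twists of $\Z$, $H^\bullet(Y,\Z)$, and $H^\bullet(\Hilb^2_{Q'}(Y),\Z)$.

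For torsion-freeness it suffices that these three building blocks be torsion-free, which is clear for $\Z$ and for the K3 surface $Y$. For the smooth projective threefold $Z:=\Hilb^2_{Q'}(Y)$, Poincar\'e duality and universal coefficients give $\mathrm{Tors}\,H^k(Z,\Z)\cong\mathrm{Tors}\,H^{4-k}(Z,\Z)$, which together with the torsion-freeness of $H^1$ confines the possible torsion to $H^2(Z,\Z)$; but the Lefschetz hyperplane theorem for the smooth ample divisor $Z\subset\Hilb^2(Y)$ gives $H^2(Z,\Z)\cong H^2(\Hilb^2(Y),\Z)$, which is torsion-free since the Hilbert square of a K3 surface has torsion-free cohomology. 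I expect this last point to be the only step not already handed to us by the geometry: the torsion-freeness of $\Hilb^2_{Q'}(Y)$ is not formal and genuinely uses both the threefold duality identity and the cohomology of $\Hilb^2(Y)$, whereas the embedding itself is a purely formal consequence of the diagram and the decomposition formulas.
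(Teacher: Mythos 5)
Your construction of the diagram and your decomposition of the cohomology of the top variety match the paper's proof step for step: the same verification of the hypotheses of Proposition~\ref{proposition:quintic-fibration} via Lemma~\ref{lemma:d2}, Lemma~\ref{lemma:d3} and Proposition~\ref{proposition:d}, the same use of Lemma~\ref{lemma:blow ups} and of the isomorphism~\eqref{eq:iso-blowups}, and the same chain of projective-bundle and blow-up formulas reducing everything to Tate twists of $\Z$, $H^\bullet(Y,\Z)$, and $H^\bullet(\Hilb^2_{Q'}(Y),\Z)$. The genuine gap is in the very last step, the torsion-freeness of $H^\bullet(Z,\Z)$ for the threefold $Z=\Hilb^2_{Q'}(Y)$. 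You assert that Poincar\'e duality and universal coefficients give $\mathrm{Tors}\,H^k(Z,\Z)\cong\mathrm{Tors}\,H^{4-k}(Z,\Z)$; that is the formula for a closed \emph{real} $3$-manifold. Here $Z$ is a complex threefold, i.e., a closed oriented manifold of real dimension $6$, for which the correct relation is $\mathrm{Tors}\,H^k(Z,\Z)\cong\mathrm{Tors}\,H^{7-k}(Z,\Z)$. With the correct relation, torsion-freeness of $H^1$ and $H^2$ only kills the torsion of $H^6$ and $H^5$, while the middle groups satisfy $\mathrm{Tors}\,H^3(Z,\Z)\cong\mathrm{Tors}\,H^4(Z,\Z)$ and are paired with \emph{each other}, so your argument leaves them completely unconstrained. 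This is not a harmless slip: the middle-degree torsion is exactly the nontrivial part, just as for the sixfold $X$ itself, where the entire problem is the torsion of $H^6$ and $H^7$.

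The step can be repaired, and the repair is what the paper's appeal to ``the Lefschetz Hyperplane Theorem and the Universal Coefficients Theorem'' amounts to. Use Lefschetz in homology: since $Z$ is a smooth ample divisor in the fourfold $W=\Hilb^2(Y)$ (Lemma~\ref{lemma:s2q}), the maps $H_k(Z,\Z)\to H_k(W,\Z)$ are isomorphisms for $k\le 2$; as $H_\bullet(W,\Z)$ is torsion-free (Hilbert square of a K3 surface), the groups $H_0(Z,\Z)$, $H_1(Z,\Z)$, $H_2(Z,\Z)$ are torsion-free. Universal coefficients then give $\mathrm{Tors}\,H^k(Z,\Z)\cong\mathrm{Tors}\,H_{k-1}(Z,\Z)=0$ for $k\le 3$, and the correct duality $\mathrm{Tors}\,H^k(Z,\Z)\cong\mathrm{Tors}\,H^{7-k}(Z,\Z)$ then kills the torsion of $H^4$, $H^5$, and $H^6$. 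With this substitution, your proof is complete and coincides with the paper's.
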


\begin{proof}
{Consider the family of quintic del Pezzo threefolds $\cM = \P(\rS_X) \to B$ constructed in Lemma~\ref{lemma:fibers-q}.\ Since the bundle $\cU$ on $\cM$ is the pullback of the bundle $\cU_X$ from $X$, we have
\begin{equation*}
\P_\cM(\cU) \cong \P(\rS_X) \times_X \P_X(\cU_X).
\end{equation*}
Let $\varphi  $
be the   map of bundles on $ B \times \P(V_5)$ defined in~\eqref{eq:varphi}.\
By Lemma~\ref{lemma:fibers-q} and Lemma~\ref{lemma:d3}, we have $D_3(\varphi) = 0$.\
Hence, by Lemma \ref{lemma:d2} and Proposition \ref{proposition:d}, the assumptions of Proposition~\ref{proposition:quintic-fibration} are fulfilled and we obtain
\begin{equation*}
\P(\rS_X) \times_X \P_X(\cU_X) \cong \Bl_{D_1(\varphi)}(B \times \P(V_5)).
\end{equation*}}
Therefore,  a combination of Lemma~\ref{lemma:blow ups} and isomorphism~\eqref{eq:iso-blowups} shows the existence of
the   top left arrow  and proves that it is the blow up of the preimage of $D_2(\varphi)$.\
Since  $D_3(\varphi) = \emptyset$, the latter is, by Proposition~\ref{proposition:quintic-fibration}, a $\P^2$-fibration over $D_2(\varphi)$.

By the projective bundle formula, $H^\bullet(X,\Z)$   embeds  into $H^\bullet(\P(\rS_X) \times_X \P_X( {\cU_X}),\Z)$
 and by the blow up formula and the fact that $D_2(\varphi)$ is smooth, the latter embeds into the cohomology of the top variety.  

It remains to describe  {$H^\bullet(\Bl_{D'_1(\varphi)}(\Bl_{D_2(\varphi)}(B \times \P(V_5))), \Z)$}.
We use its description as an iterated blow up.
Since all blow up centers are smooth  {by Lemma~\ref{lemma:d2} and Proposition~\ref{prop:dprime}}, the cohomology of the top variety 
is a direct sum of Tate twists of $H^\bullet(B\times \P(V_5))$ and of  the cohomology of the blow up centers.\ 
Explicitly, we have
 \begin{equation*}
\begin{array}{rccl}
H^\bullet(\Bl_{D'_1(\varphi)}(\Bl_{D_2(\varphi)}(B \times \P(V_5))),\Z) & = &&
H^\bullet(B\times \P(V_5),\Z) \\
&&  \oplus& 
H^\bullet(D_2(\varphi),\Z) \otimes (\LL \oplus \LL^2 \oplus \LL^3 \oplus \LL^4 \oplus \LL^5) \\
&&  \oplus&
H^\bullet(D'_1(\varphi),\Z) \otimes \LL,
\end{array}
\end{equation*}
where $\LL$ is the Tate module.\ Furthermore,  we have by Lemma~\ref{lemma:d2}
\begin{equation*}
H^\bullet(D_2(\varphi),\Z) = H^\bullet(\Hilb^2_{Q'}(Y),\Z) \otimes (\unit \oplus \LL).
\end{equation*}
Since $D'_1(\varphi)$ is itself an iterated blow up,  we have by Proposition~\ref{prop:dprime}
\begin{equation*}
\begin{array}{rccl}
H^\bullet(D'_1(\varphi),\Z) & =&& 
H^\bullet(\P_B( {\cR}),\Z) \\
&&  \oplus&
H^\bullet(\P(\rS_Y),\Z) \otimes (\LL \oplus \LL^2) \\
&&  \oplus&
H^\bullet(D_2(\varphi),\Z) \otimes (\unit \oplus \LL) \otimes (\LL \oplus \LL^2).
\end{array}
\end{equation*}
Combining all these formulas, we deduce  {that $H^\bullet(\Bl_{D'_1(\varphi)}(\Bl_{D_2(\varphi)}(B \times \P(V_5))),\Z)$ equals}
\begin{equation*}
\begin{array}{rccl}
&&& H^\bullet(B,\Z) \otimes (\unit \oplus 2\LL \oplus 2\LL^2 \oplus 2\LL^3 \oplus \LL^4) \\
&& \oplus &
H^\bullet(Y,\Z) \otimes (\LL^2 \oplus 2\LL^3 \oplus 2\LL^4 \oplus 2\LL^5 \oplus \LL^6) \\
&& \oplus & 
H^\bullet(\Hilb^2_{Q'}(Y),\Z) \otimes (\LL \oplus 3\LL^2 \oplus 5\LL^3 \oplus 5\LL^4 \oplus 3\LL^5 \oplus \LL^6),
\end{array}
\end{equation*}
Since $B \cong Q^6$, the first is a sum of Tate twists of $\Z$.

To prove the last statement, we note that   $H^\bullet(Y,\Z)$ is torsion-free because $Y$ is a K3 surface.\ 
 By Lemma~\ref{lemma:s2q},  $\Hilb^2_{Q'}(Y)$ is a smooth ample divisor in $\Hilb^2(Y)$.\
Since $H^\bullet(\Hilb^2_{Q'}(Y),\Z)$ is torsion-free,  the same is true for $H^\bullet(\Hilb^2_{Q'}(Y),\Z)$ by the Lefschetz Hyperplane Theorem and the Universal Coefficients Theorem.\
This completes the proof of the theorem.
 \end{proof}

The argument in the proof above  also works at the level of Chow motives and gives an  {isomorphism}
\begin{align*}
(\Bl_{D'_1(\varphi)}(\Bl_{D_2(\varphi)}(B \times \P(V_5)))) & \cong
(\MM(B) \oplus (\MM(Y) \otimes \LL^2)) \otimes (\unit \oplus 2\LL \oplus 2\LL^2 \oplus 2\LL^3 \oplus \LL^4) \\
& {}\oplus 
\MM(\Hilb^2_{Q'}(Y)) \otimes (\LL \oplus 3\LL^2 \oplus 5\LL^3 \oplus 5\LL^4 \oplus 3\LL^5 \oplus \LL^6),
\end{align*}
where $\MM(-)$ stands for the integral Chow motive of a variety.\
On the other hand,  {since} the map $\Bl_{D'_1(\varphi)}(\Bl_{D_2(\varphi)}(B \times \P(V_5))) \to \P(\rS_X) \times_X \P_X(\cU)$ 
is the blow up with center a $\P^2$-bundle over $D_2(\varphi)$,  {we have}
\begin{align*}
\MM(\Bl_{D'_1(\varphi)}(\Bl_{D_2(\varphi)}(B \times \P(V_5)))) & \cong
\MM(X) \otimes (\unit \oplus 2\LL \oplus 2\LL^2 \oplus 2\LL^3 \oplus \LL^4) \\
& \oplus 
\MM(\Hilb^2_{Q'}(Y)) \otimes (\LL \oplus 3\LL^2 \oplus 5\LL^3 \oplus 5\LL^4 \oplus 3\LL^5 \oplus \LL^6).
\end{align*}
Comparing the two expressions,   we obtain
\begin{equation}\label{eq:motives}
\MM(X) \otimes \MM_1 \oplus \MM_2 \cong
(\MM(B) \oplus (\MM(Y) \otimes \LL^2)) \otimes \MM_1 \oplus \MM_2,
\end{equation} 
where $\MM_1 = \unit \oplus 2\LL \oplus 2\LL^2 \oplus 2\LL^3 \oplus \LL^4$ and $\MM_2 = \MM(\Hilb^2_{Q'}(Y)) \otimes (\LL \oplus 3\LL^2 \oplus 5\LL^3 \oplus 5\LL^4 \oplus 3\LL^5 \oplus \LL^6)$.
It would be interesting to understand whether the summand $\MM_2$ and the factor $\MM_1$ can be canceled out, providing an isomorphism of motives \
$\MM(X) \cong \MM(B) \oplus (\MM(Y) \otimes \LL^2)$?

In any case,  any realization functor with values in a semisimple category  {such that the realization of $\MM_2$ is not a zero divisor,} when evaluated on $\MM(X)$,  satisfies the above equality.\
For instance, we have the following  result.

\begin{coro}
The Hodge numbers of any smooth GM sixfold $X$ satisfy
\begin{equation}\label{hpq}
h^{p,q}(X) = h^{p,q}(B) + h^{p-2,q-2}(Y).
\end{equation}
 {In particular, the Hodge diamond of a smooth GM sixfold  {is}
\begin{equation*}
\begin{smallmatrix}
&&&&&& 1 \\
&&&&& 0 && 0 \\
&&&& 0 && 1 && 0 \\
&&& 0 && 0 && 0 && 0 \\
&& 0 && 0 && 2 && 0 && 0  \\
& 0 && 0 && 0 && 0 && 0 && 0 \\
0 && 0 && 1 && 22 && 1 && 0 && 0 \\
& 0 && 0 && 0 && 0 && 0 && 0 \\
&& 0 && 0 && 2 && 0 && 0  \\
&&& 0 && 0 && 0 && 0 \\
&&&& 0 && 1 && 0 \\
&&&&& 0 && 0 \\
&&&&&& 1 \\
\end{smallmatrix}
\end{equation*}}
\end{coro}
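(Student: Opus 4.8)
The plan is to derive the Hodge-number identity~\eqref{hpq} from the motivic relation~\eqref{eq:motives} by applying the Hodge--Poincar\'e polynomial realization. For a smooth projective variety $Z$ I would set $P_Z(u,v) := \sum_{p,q} h^{p,q}(Z)\,u^p v^q \in \Z[u,v]$ and extend this to integral Chow motives by declaring $\LL \mapsto uv$. By the K\"unneth formula this assignment is additive on direct sums and multiplicative on tensor products, so it descends to a ring homomorphism on the Grothendieck ring of integral Chow motives. Writing $t := uv$, the realization of the auxiliary factor is the nonzero polynomial $\hat M_1 := 1 + 2t + 2t^2 + 2t^3 + t^4$, while the realization $P_{\MM_2}$ of $\MM_2$ is a polynomial whose precise shape is immaterial, since it will cancel.

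Applying $P$ to~\eqref{eq:motives} gives the identity
\begin{equation*}
P_X \cdot \hat M_1 + P_{\MM_2} = \bigl(P_B + t^2 P_Y\bigr)\cdot \hat M_1 + P_{\MM_2}
\end{equation*}
in $\Z[u,v]$. I would first cancel the common summand $P_{\MM_2}$, an additive cancellation that is always legitimate, and then cancel the common factor $\hat M_1$: since $\Z[u,v]$ is an integral domain and $\hat M_1 \ne 0$, it is a non-zero-divisor, so this division is valid. What remains is $P_X = P_B + t^2 P_Y$, and extracting the coefficient of $u^p v^q$ (using $t^2 P_Y = \sum_{a,b} h^{a,b}(Y)\,u^{a+2} v^{b+2}$) yields precisely~\eqref{hpq}.

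It then remains to substitute the Hodge numbers of the two building blocks. Since $B \cong Q^6$ is a smooth six-dimensional quadric, its cohomology is of Tate type, with $h^{i,i}(B) = 1$ for $i \in \{0,1,2,4,5,6\}$, with $h^{3,3}(B) = 2$, and with all off-diagonal Hodge numbers equal to zero. Since $Y$ is a K3 surface, $h^{0,0}(Y) = h^{2,2}(Y) = 1$, $h^{2,0}(Y) = h^{0,2}(Y) = 1$, and $h^{1,1}(Y) = 20$. Feeding these into $h^{p,q}(X) = h^{p,q}(B) + h^{p-2,q-2}(Y)$ reproduces the displayed diamond: the quadric supplies the diagonal skeleton, while the twist by $\LL^2$ adds $1$ to each of $h^{2,2}$ and $h^{4,4}$, adds $20$ to $h^{3,3}$, and creates the two off-diagonal entries $h^{4,2} = h^{2,4} = 1$.

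I do not expect a serious obstacle, as~\eqref{eq:motives} already carries all the content; the only points needing care are the two cancellations and the well-definedness of the realization. The additive cancellation of $P_{\MM_2}$ is immediate, and the multiplicative cancellation of $\hat M_1$ is justified by working in the integral domain $\Z[u,v]$. Alternatively, following the remark preceding the corollary, one may cancel the tensor factor $\MM_1$ already at the level of polarizable rational Hodge structures, using the semisimplicity of that category, and only afterwards pass to Hodge numbers. Finally, one should confirm that $P$ is a genuine motivic measure on integral Chow motives, which follows from the strict compatibility of the Hodge realization with direct sums and tensor products.
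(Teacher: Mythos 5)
Your derivation of \eqref{hpq} from \eqref{eq:motives} is essentially the paper's own argument, and your implementation of the two cancellations is fine: passing to Hodge--Poincar\'e polynomials in $\Z[u,v]$ (with $\LL \mapsto uv$), cancelling the summand coming from $\MM_2$ additively, and then cancelling the factor coming from $\MM_1$ using that $\Z[u,v]$ is an integral domain, is a clean and slightly more elementary rendering of the paper's remark that one may cancel after applying a realization functor with semisimple target. The numerical bookkeeping with $h^{p,q}(B)$ for the six-dimensional quadric and the K3 Hodge numbers of $Y$ is also correct and reproduces the displayed diamond.

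However, there is a genuine gap: the relation \eqref{eq:motives} is established only for the particular sixfolds of Lemma~\ref{lemma:6fold}, namely those $X$ that are generalized duals of a GM surface $Y$ satisfying the generality assumptions \eqref{eq:assumptions}; as noted in Section~\ref{subsection:dual-gm}, these form a family of codimension $1$ in the coarse moduli space of GM sixfolds. Your argument therefore proves $h^{p,q}(X) = h^{p,q}(B) + h^{p-2,q-2}(Y)$ only for that special subfamily, whereas the corollary asserts it for \emph{any} smooth GM sixfold --- indeed, for a general $X$ there is no distinguished K3 surface $Y$ in sight, and the right-hand side of \eqref{hpq} must be read as fixed numbers. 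The missing step is the deformation argument that the paper invokes explicitly: all smooth GM sixfolds belong to a single connected (indeed irreducible) smooth projective family, and Hodge numbers are constant in such families (by upper semicontinuity of $h^{p,q}$ together with constancy of the Betti numbers), so the formula proved on the nonempty codimension-$1$ subfamily propagates to every smooth GM sixfold. Without this step the quantifier ``any'' in the statement is not justified by your proof.
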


\begin{proof}
For the GM sixfolds constructed in Lemma~\ref{lemma:6fold}, the equality~\eqref{hpq} follows from~\eqref{eq:motives} by considering the Hodge realization functor; for arbitrary GM sixfolds, it follows by a deformation argument.\
It remains to note that $B$ is a 6-dimensional quadric to obtain all the Hodge numbers.
\end{proof}

\end{document}